\documentclass{article}

\usepackage{amsmath, amssymb}
\usepackage{graphicx, wrapfig} 
\usepackage{amsthm}
\usepackage{url}
\usepackage{color}
\usepackage{bm}
\usepackage{xspace}
\usepackage{hyperref}

\theoremstyle{plain}
\newtheorem{theorem}{Theorem}

\theoremstyle{definition}
\newtheorem{definition}{Definition}
\newtheorem{example}{Example}

\newtheorem{proposition}{Proposition}
\newtheorem{lemma}{Lemma}
\newtheorem{corollary}{Corollary}

\newcommand{\II}{\mbox{I\hspace{-0.5pt}I}\xspace}
\newcommand{\III}{\mbox{I\hspace{-0.5pt}I\hspace{-0.5pt}I}\xspace}
\newcommand{\IIIa}{\mbox{I\hspace{-0.5pt}I\hspace{-0.5pt}I$\alpha$}\xspace}

\title{The OU number and Reidemeister moves of type III for link diagrams}
\author{Naoki Sakata\thanks{Advanced Institute for Materials Research (AIMR), Tohoku University, 2-1-1 Katahira, Aoba-ku, Sendai, Miyagi, 980-8577, Japan. Email: sakata@casis.sakura.ne.jp},  
Ayaka Shimizu\thanks{Center for Soft Matter Physics, Ochanomizu University, 2-1-1, Otsuka, Bunkyo-ku, Tokyo, 112-8610, Japan. Email: shimizu.ayaka@ocha.ac.jp, shimizu1984@gmail.com} 
and Koya Shimokawa\thanks{Department of Mathematics, Ochanomizu University, 2-1-1, Otsuka, Bunkyo-ku, Tokyo, 112-8610, Japan. 
International Institute for Sustainability
with Knotted Chiral Meta Matter (WPI-SKCM$^2$),
Hiroshima University, 1-3-1 Kagamiyama, 
Higashi-Hiroshima, 739-8526, Japan. 
Research Institute for Science and Technology at Tokyo University of Science,  Division of Joint Research of Geometry and Natural Science, 1-3 Kagurazaka, Shinjuku-ku, Tokyo 162-8601, Japan.
Email: shimokawa.koya@ocha.ac.jp}}
\date{\today}

\begin{document}

\maketitle

\begin{abstract}
We introduce the non-self OU sequence and the OU number for link diagrams. 
Using these, we give a lower bound for the number of necessary Reidemeister moves of type III between two diagrams of the same link. 
\end{abstract}

\section{Introduction}

An {\it $r$-component link} $L= k_1 \cup k_2 \cup \dots \cup k_r$ is an embedding of $r$ circles in $S^3$ or $\mathbb{R}^3$. 
When each component $k_i$ is oriented, we say $L$ is {\it oriented}. 
When $r=1$, we also call $L$ a {\it knot}. 
A {\it diagram} $D=K_1 \cup K_2 \cup \dots \cup K_r$ of a link $L= k_1 \cup k_2 \cup \dots \cup k_r$ is a regular projection of $L$ onto $S^2$ or $\mathbb{R}^2$, where each intersection is a transverse double point with over/under information. 

It is classical in knot theory that two diagrams $D$, $D'$ represent the same link if and only if there exists a finite sequence of Reidemeister moves that deforms $D$ into $D'$, where the Reidemeister moves are the local transformations on diagrams of type RI, R\II or R\III shown in Figure~\ref{fig-r123}. 
\begin{figure}[htbp]
\centering
\includegraphics[width=7cm]{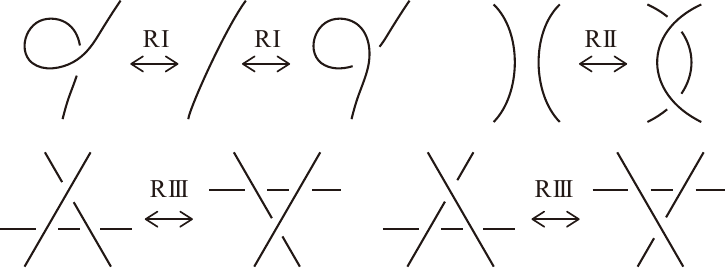}
\caption{Reidemeister moves. }
\label{fig-r123}
\end{figure}

The complexity of link diagrams plays a crucial role in understanding the dynamics of ring polymers.
In simulations of polymer melts composed of ring polymers, it has been observed that complex entanglement suppresses movement, leading to a glass transition known as ``topological glass''~\cite{MT}.
In such systems, any pair of ring polymers typically forms a trivial link.
Although they are mathematically unlinked, physically separating the two components requires specific spatial movements.
Consequently, the polymers form a complex entangled network that restricts individual mobility, which is expected to significantly affect the material properties.

To investigate mathematically how a two-component trivial link can be disentangled, it is natural to analyze the sequence of Reidemeister moves applied to its diagram.
Specifically, for two given diagrams of the same link, we can measure how ``distant'' they are by considering the minimum number of Reidemeister moves required to transform one to the other, or by determining the necessity of specific types of moves in the sequence.
Many studies have addressed this topic (see, for example, \cite{CESS, HH, M, O}). 

As far as R\III moves on diagrams of links with two or more components are concerned, the literature is limited. 
In \cite{CESS}, Carter, Elhamdadi, Saito and Satoh gave a pair of diagrams $D$ and $D'$ of the same 2-component link such that transforming $D$ into $D'$ in $\mathbb{R}^2$ requires three R\III moves, using Fox $n$-colorings. 
In \cite{J}, Jablonowski gave an infinite family of diagrams $D_{n,k}$ of a trivial 2-component link such that transforming $D_{n,k}$ into a diagram without crossings requires at least four R\III moves, employing a four-dimensional technique. 

In this paper, we introduce a numerical invariant of diagrams, called the {\it OU number}, for each component of a link diagram. 
Using this invariant, we give a lower bound for the number of necessary R\III moves of a particular type between two diagrams of the same link. 
Then we prove that for any natural number $n$, there exists a diagram $D$ of a trivial 2-component link such that transforming $D$ into a diagram without crossings requires at least $n$ R\III moves (Theorem~\ref{thm-infty}) using Jablonowski's diagram. 

\medskip

The OU sequence of a knot diagram was studied in \cite{FNS, HNSY, S}. 
In this paper, we introduce the {\it non-self OU sequence} for link diagrams. 
Let $D=K_1 \cup K_2 \cup \dots \cup K_r$ be an oriented link diagram on $S^2$. 
By traversing a component $K_i$ and recording the over/under information as ``$O$'' and ``$U$'' for only non-self crossings (i.e., crossings between distinct components) that are encountered, we obtain a cyclic sequence and call it the non-self OU sequence $f(K_i)$ of $K_i$. 
In Section~\ref{section-core-bridge}, we define the {\it OU number} $\Phi (K_i)$ of $K_i$, which can be easily computed from $f(K_i)$. 
Roughly speaking, the OU number $\Phi$ is the number of pairs of ``$OU$'' after repeatedly removing consecutive ``$OO$'' or ``$UU$'' pairs from $f(K_i)$. 
(See Section~\ref{section-core-bridge} for a precise definition.) 
For example, the link diagram $D=K_1 \cup K_2$ illustrated on the left-hand side in Figure \ref{fig-r3-ex} has the non-self OU sequences $f(K_1)=OUOU$, $f(K_2)=OOUU$ and OU numbers $\Phi (K_1)=2$, $\Phi (K_2)=0$. 

In Section~\ref{section-r3}, we divide R\III moves into two types, ``R\IIIa'' and the others, considering the layeredness of components. 
An R\III move is called an \emph{R\IIIa move} when the middle segment belongs to a different component from the other segments. 
The R\III move indicated in Figure~\ref{fig-r3-ex} is of type R\IIIa. 
\begin{figure}[htbp]
\centering
\includegraphics[width=6.5cm]{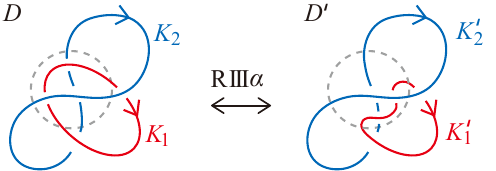}
\caption{An R\IIIa move.}
\label{fig-r3-ex}
\end{figure}

The R\IIIa move can play a role in detecting a ``threading'' under specific conditions.
Here, we define a threading in a two-component trivial link as a configuration where one component intersects the minimal surface bounded by the other component (cf.~\cite{SG}).
In Figure~\ref{fig-r3-ex}, $K_1$ can be regarded as threading $K_2$.
This interpretation arises because a portion of the spanning disk of $K_2$ is visually implied around the self-crossing of $K_2$.
Since performing a single R\IIIa move is necessary to separate $K_1$ from $K_2$, the non-self crossing effectively represents the point where $K_1$ penetrates the disk.

In Section~\ref{section-r3}, we prove the following theorem. 

\begin{theorem}
For each knot component $K_i$ of an \textup{(}oriented or unoriented\textup{)} link diagram $D=K_1 \cup K_2 \cup \dots \cup K_r$, the OU number $\Phi (K_i)$ is invariant under Reidemeister moves except for R\IIIa. 
\label{thm-r3a}
\end{theorem}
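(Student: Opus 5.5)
The plan is to verify directly that each type of Reidemeister move changes the non-self OU sequence $f(K_i)$ only in ways that leave $\Phi(K_i)$ unchanged. I will use the characterization of $\Phi$ from Section~\ref{section-core-bridge}: $f(K_i)$ is a cyclic word in $O,U$, and repeatedly cancelling cyclically adjacent pairs $OO$ or $UU$ yields a reduced alternating cyclic word $(OU)^k$. I assume, as part of the definition there, that the result does not depend on the order of cancellations, and $\Phi(K_i)=k$. The length is even because $K_i$ meets the other components in an even number of crossings on $S^2$. Two consequences follow at once. Inserting or deleting a cyclically adjacent $OO$ or $UU$ does not change $\Phi$. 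Neither does transposing two adjacent \emph{equal} letters, which changes nothing at all. Orientation only reverses the cyclic word, and reversal preserves $(OU)^k$ up to cyclic rotation, so the unoriented case follows from the oriented one.

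RI creates or removes a self-crossing, which is never recorded, so every $f(K_j)$ is unchanged. For RII, if the two strands belong to the same component, both new crossings are self-crossings and nothing changes. If the strands belong to $K_i$ and $K_j$ with $i\neq j$, then along $K_i$ the two new crossings are consecutive and are both over or both under. Hence $f(K_i)$ gains (or loses) an adjacent $OO$ or $UU$, and the same happens in $f(K_j)$. Any third component is unaffected. By the cancellation property, $\Phi$ is unchanged in every case.

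For RIII, label the three strands top, middle and bottom. The move does not change which pairs of strands cross or the over/under data at those crossings. On each strand it only reverses the order in which that strand meets its two crossings. On the top strand the two letters, when recorded, are both $O$. On the bottom strand they are both $U$. Reversing two equal letters, or a single recorded letter when the other crossing is a self-crossing, leaves the cyclic word unchanged. On the middle strand the letters are $O$ (at the crossing with the bottom strand) and $U$ (at the crossing with the top strand). Their order really does change, from $\dots OU\dots$ to $\dots UO\dots$ or conversely, only when both of these crossings are non-self. That is exactly when the middle segment belongs to a component different from those of both other segments, that is, when the move is an R\IIIa move. Every other R\III move leaves all $f(K_j)$ unchanged, which proves the theorem.

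The main obstacle is not a computation but the delicate point in the middle-strand case. Replacing $OU$ by $UO$ genuinely can change $\Phi$: for example, $OUOU$ becomes $UOOU$, which reduces to the empty word. So the argument must use precisely that R\IIIa covers every configuration in which the middle strand is non-self with respect to both other strands. This includes the case of three distinct components, not only the case where the top and bottom strands belong to one common component. I would state this reading of the definition explicitly, and check it against the precise definition in Section~\ref{section-r3}, before concluding.
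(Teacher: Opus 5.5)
Your proposal is correct and follows essentially the same route as the paper. RI only touches self-crossings. An RII move between distinct components inserts or deletes an adjacent $OO$/$UU$, which is a reduction, so $\Phi$ is preserved by Corollary~\ref{cor-con-sigma}; that corollary supplies exactly the cancellation property you assume. An R\III move changes a non-self OU sequence only by swapping $OU$ and $UO$ on the middle segment, and this happens precisely for R\IIIa moves (Propositions~\ref{prop-r2} and~\ref{prop-r3}), while your treatment of the unoriented case via reversal of the cyclic word matches Proposition~\ref{prop-sigma-ori}.
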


\noindent Theorem \ref{thm-r3a} implies that the OU number can be used as an ``R\IIIa-indicator'' for link diagrams. 
For example, we observe that any sequence of Reidemeister moves between the two link diagrams $D=K_1 \cup K_2$ and $D' = K'_1 \cup K'_2$ in Figure \ref{fig-r3-ex} requires at least one R\IIIa move because $\Phi (K_1)=2$, $\Phi (K_2)=0$ and $\Phi (K'_1)=0$, $\Phi (K'_2)=0$. 
Moreover, we discuss the lower bound for the number of necessary R\IIIa moves, and prove the following theorem in Section~\ref{section-r3}. 

\begin{theorem}
Let $D=K_1 \cup K_2 \cup \dots \cup K_r$ and $D'= K'_1 \cup K'_2 \cup \dots \cup K'_r$ be diagrams of the same unoriented {\it ordered} link. 
Any sequence of Reidemeister moves that transforms $D$ into $D'$ must contain at least 
\begin{align*}
\frac{1}{2} \sum_{i=1}^r | \Phi (K'_i) - \Phi (K_i)|
\end{align*}
Reidemeister moves of type R\IIIa. 
\label{thm-ordered}
\end{theorem}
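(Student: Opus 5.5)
Since Theorem~\ref{thm-r3a} already says that every move other than R\IIIa leaves each $\Phi(K_i)$ unchanged, the proof reduces to one local claim. A single R\IIIa move changes the vector $(\Phi(K_1),\dots,\Phi(K_r))$ by a total amount
\[
\sum_{i=1}^r |\Delta\Phi(K_i)| \le 2 .
\]
Granting this, fix a sequence $D=D_0\to D_1\to\dots\to D_m=D'$ of Reidemeister moves. Because the link is ordered, the components of each $D_j$ are labelled consistently along the sequence. Set $S_j=\sum_i |\Phi(K_i^{(j)})-\Phi(K_i)|$. By the triangle inequality, $S_{j+1}-S_j \le \sum_i |\Phi(K_i^{(j+1)})-\Phi(K_i^{(j)})|$. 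This increment is $0$ for non-R\IIIa moves and at most $2$ for R\IIIa moves. Summing over $j$ gives $\sum_i|\Phi(K'_i)-\Phi(K_i)| = S_m \le 2N$, where $N$ is the number of R\IIIa moves, which is the statement. The orientation issue causes no trouble: $\Phi$ is computed from a cyclic sequence, and reversing it should not change $\Phi$. This is consistent with the unoriented form of Theorem~\ref{thm-r3a}, so I may assume it.

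To prove the local claim, I examine an R\IIIa move. Let the middle strand belong to component $K_a$ and the top and bottom strands to component(s) other than $K_a$. The top and bottom strands may belong to a single component $K_b$, or to two different components $K_b, K_c$.

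\emph{Middle strand.} It passes under the top strand and over the bottom strand, and both crossings are non-self for $K_a$. Along the middle strand, the move swaps the order of these two crossings. So $f(K_a)$ changes by replacing a consecutive $UO$ with $OU$, or vice versa; no other letters of $f(K_a)$ change.

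\emph{Top and bottom strands.} The top strand meets the middle strand (non-self, letter $O$) and the bottom strand. The crossing with the bottom strand is a self-crossing if $b=c$, and is then invisible in $f$. If $b\neq c$ it is a non-self crossing with letter $O$. The move swaps the order of these two crossings on the top strand. A swap of two equal letters ($OO$), or a swap involving an invisible letter, leaves the cyclic sequence unchanged. The same holds for the bottom strand with letters $U$. So $f(K_b)$ and $f(K_c)$ are literally unchanged, and only $\Phi(K_a)$ can move.

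The remaining step, which I expect to be the main obstacle, is to show that swapping one adjacent pair $OU \leftrightarrow UO$ in a cyclic word changes $\Phi$ by at most $2$. This needs the precise reduction definition from Section~\ref{section-core-bridge}. I would treat the reduction, which cancels $OO$ and $UU$, as free reduction in the group $\mathbb{Z}/2 * \mathbb{Z}/2$ up to conjugacy. A cyclically reduced word is then an alternating word $(OU)^{k}$, with $\Phi = k$ or a similar count. Swapping an adjacent $OU$ to $UO$ equals inserting $UU$ on one side and $OO$ on the other, up to cancellation. Concretely, $xOUy \mapsto xUOy$ can be written as $x\,U(UO)(OU)\,O\,y$-type insertions, or handled directly: the two words differ by deleting the length-2 block $OU$ and inserting $UO$ at the same spot. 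Deleting or inserting a pair of letters changes the cyclic reduced length by at most $2$, hence $\Phi$ changes by at most $1$ per operation. That gives $|\Delta\Phi(K_a)|\le 2$, as required. If the paper's normalization counts differently (e.g.\ $\Phi$ as half the reduced length), I would adjust the bookkeeping accordingly. I would also check carefully the degenerate cases where the whole reduced word collapses, such as $f(K_a)=OU$ becoming $UO$.
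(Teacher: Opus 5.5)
Your reduction to a local claim, the telescoping over the sequence of moves, and your analysis of which letters change under an R\IIIa move all match the paper. These correspond to Theorem~\ref{thm-r3a}, the proof of Proposition~\ref{prop-r3}, and Corollary~\ref{cor-r3-ki} summed over components.

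The gap is in the one step that genuinely needs proof: that replacing an adjacent $OU$ by $UO$ in the cyclic word $f(K_a)$ changes $\Phi$ by at most $2$. You support it only with the sentence ``deleting or inserting a pair of letters changes the cyclic reduced length by at most $2$''. That sentence is asserted rather than proved, and it does not follow from the mechanics of cancellation alone. An inserted or deleted block can block or unlock cancellations elsewhere around the cycle. For example, in a free group of rank two, inserting $aa$ into the cyclic word $b^{-n}ab^{n}$ at the junction raises the cyclically reduced length from $1$ to $2n+3$. So the bound rests on special features of even-length words in $O,U$, and you have to supply them.

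The paper sidesteps reduced lengths altogether. For a linear representative $S=X_1\cdots X_m$ of even length it uses the additive formula $\Phi(S)=\frac12\sum\varepsilon(X_i)$. Swapping $X_iX_{i+1}=OU$ to $UO$ flips both signs, so $\Phi(S)$ shifts by exactly $\pm2$. Hence $\Phi(w)=|\Phi(S)|$ changes by $0$ or $2$ (Proposition~\ref{prop-ou-uo}).

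To repair your own version directly:
\begin{itemize}
\item Read $w$ starting at the swapped pair, so that $w=OU\cdot T$ and $w'=UO\cdot T$ as linear words.
\item Reduce $T$ to an alternating word $T_0$; linear reduction is confluent.
\item If $T_0\neq\emptyset$, then $OU\cdot T_0$ and $UO\cdot T_0$ reduce to alternating words of lengths $|T_0|+2$ and $|T_0|-2$, one of each. If $T_0=\emptyset$, both have length $2$.
\item An alternating word of even length begins and ends with different letters, so no further cyclic reduction is possible. Hence $\Phi$ changes by $0$ or $2$.
\end{itemize}
This argument still uses that $\Phi(w)$ equals half the length of any fully reduced form of $w$. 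The paper obtains that from Corollary~\ref{cor-con-sigma} and Example~\ref{ex-sigma-m}.
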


\noindent When the knot components are not distinguished in a link diagram, we have the following. 

\begin{theorem}
Let $D=K_1 \cup K_2 \cup \dots \cup K_r$ and $D'= K'_1 \cup K'_2 \cup \dots \cup K'_r$ be diagrams of the same unoriented {\it unordered} link. 
Any sequence of Reidemeister moves that transforms $D$ into $D'$ must contain at least 
\begin{align*}
\frac{1}{2} \left| \sum_{i=1}^r \Phi (K'_i) - \sum_{i=1}^r \Phi (K_i) \right|
\end{align*}
Reidemeister moves of type R\IIIa. 
\label{thm-unordered}
\end{theorem}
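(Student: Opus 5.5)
The proof of Theorem~\ref{thm-unordered} reduces to a local estimate for a single move. By Theorem~\ref{thm-r3a}, every Reidemeister move other than R\IIIa leaves each $\Phi(K_i)$ unchanged, and therefore leaves the total $\Phi(D)=\sum_i \Phi(K_i)$ unchanged. It suffices to show that one R\IIIa move changes $\Phi(D)$ by at most $2$ in absolute value. Along a sequence $D=D_0\to D_1\to\dots\to D_m=D'$ containing $k$ moves of type R\IIIa, the triangle inequality then gives
\begin{align*}
\left|\Phi(D')-\Phi(D)\right| \le \sum_{j=1}^{m} \left|\Phi(D_j)-\Phi(D_{j-1})\right| \le 2k ,
\end{align*}
which is the stated bound. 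Because $\Phi(D)$ is a sum over all components, it does not depend on how the components are labelled, so it is legitimate to compare the unordered diagrams $D$ and $D'$. Orientation plays no role either, since $\Phi$ is defined for unoriented components (Theorem~\ref{thm-r3a} covers both cases).

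For the key estimate, I would analyze one R\IIIa move in which the middle strand lies on a component $K_j$ and the top and bottom strands lie on components other than $K_j$. These other strands may lie on a single component or on two different ones. The move does not change the crossings themselves; it only changes the order in which each strand meets its two crossings. On the middle strand of $K_j$, the two non-self crossings it passes through are one $U$ (under the top strand) and one $O$ (over the bottom strand). The move swaps this adjacent pair, $OU \leftrightarrow UO$, inside $f(K_j)$. The top strand meets only crossings where it is over, and the bottom strand meets only crossings where it is under. Each of those strands meets at most one self crossing, and self crossings are not recorded in the non-self sequence. Consequently each of them contributes either an adjacent block $OO$ or $UU$, or a single letter, in its own sequence, and in every case its sequence is unchanged up to the reordering of identical letters. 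So among all the $f(K_i)$, only $f(K_j)$ changes, and it changes by swapping one adjacent pair of distinct letters.

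The main obstacle is to show that such a swap changes $\Phi(K_j)$ by at most $2$. Here I need the precise definition of $\Phi$ from Section~\ref{section-core-bridge}: reduce the cyclic word by cancelling adjacent $OO$ and $UU$ until it is alternating, then count the $OU$ pairs. I would use the fact that this reduction is confluent, so the result does not depend on the order of cancellations; I would verify or cite this. The reduced word has the form $(OU)^{n}$ with $\Phi=n$. The swap $OU\to UO$ can be realized as follows. Insert $UU$ in front of the pair to get $UUOU$, and insert $OO$ after it to get $UUOUOO$; this is a word that reduces to the original. Alternatively, the swap can be written as a composition of a cancellation and an insertion of a doubled letter. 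A single insertion or cancellation of $OO$ or $UU$ into an alternating cyclic word changes the reduced length by at most $2$, that is, by at most $1$ in $\Phi$. I would then check carefully that the composite changes $\Phi$ by at most $2$. Most directly, I would compare the reduced forms of $wOUv$ and $wUOv$ case by case according to the letters adjacent to the swapped pair. I expect this parity and length bookkeeping to be where the difficulty lies, and I would also handle separately the degenerate case where the swapped pair is the entire sequence $f(K_j)$.
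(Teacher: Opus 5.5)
Your reduction of the theorem to a one-move estimate is the paper's route (Theorem~\ref{thm-r3a}, then Proposition~\ref{prop-two} and Corollary~\ref{cor-r3-ki}). Your bookkeeping of which non-self OU sequences change under an R\IIIa move is also correct.

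The gap is in the one step with real content: showing that replacing an adjacent $OU$ by $UO$ in $f(K_j)$ changes $\Phi$ by at most $2$. The mechanism you sketch for this cannot work. Inserting or cancelling $OO$ or $UU$ does not change $\Phi$ by ``at most $1$''; it leaves $\Phi$ \emph{unchanged} (Proposition~\ref{prop-con-sigma}, Corollary~\ref{cor-con-sigma}). So any composition of such operations preserves $\Phi$. The swap in general does not: the cyclic word $OUOU$ has $\Phi=2$, while $UOOU$ reduces to $\emptyset$ and has $\Phi=0$. Hence the swap is not a composition of a cancellation and an insertion of a doubled letter. Your own attempt shows this: $UUOUOO$ reduces back to $OU$, not to $UO$. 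The closing ``compare case by case'' is only announced, so as written the key estimate is unproved.

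The missing idea is to use the paper's actual definition of $\Phi$, the signed count, rather than its reduction description. Choose a linear representative $S$ of $f(K_j)$ in which the swapped pair occupies positions $i,i+1$; this is allowed by Proposition~\ref{prop-rot}. In $OU$ both letters carry the same sign: $+1$ if $i$ is even, $-1$ if $i$ is odd. The swap flips both signs, so $\Phi(S')=\Phi(S)\pm 2$, and therefore $\bigl|\,|\Phi(S')|-|\Phi(S)|\,\bigr|\le 2$. This is Proposition~\ref{prop-ou-uo}.

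If you prefer to stay with reductions, your case analysis can be completed as follows. Write the cyclic word as $OU\,z$ and reduce the linear word $z$ to an alternating word $z'$ of even length. Then $z'$ is $\emptyset$, $(OU)^p$ or $(UO)^p$. In these three cases the words $OUz'$ and $UOz'$ reduce to alternating cyclic words with $\Phi$ values $(1,1)$, $(p+1,p-1)$ and $(p-1,p+1)$ respectively. With either argument in place of the insertion/cancellation step, the rest of your proof goes through unchanged.
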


\noindent We call a diagram of an $r$-component link with no (self or non-self) crossings a {\it trivial diagram}. 
We prove the following theorem in Section~\ref{section-example}. 

\medskip 
\begin{theorem}
For any natural number $n$, there exists a diagram $D$ of a trivial $2$-component link such that transforming $D$ into a trivial diagram requires at least $n$ R\IIIa moves. 
\label{thm-infty}
\end{theorem}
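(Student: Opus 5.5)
The plan is to derive Theorem~\ref{thm-infty} from Theorem~\ref{thm-unordered} (or Theorem~\ref{thm-ordered}) by constructing, for each $n$, a diagram $D_n = K_1 \cup K_2$ of the trivial 2-component link with large total OU number. For a trivial diagram $D' = K'_1 \cup K'_2$ there are no non-self crossings at all. Hence $f(K'_i)$ is empty and $\Phi(K'_1)=\Phi(K'_2)=0$. Theorem~\ref{thm-unordered} then shows that any sequence of Reidemeister moves from $D_n$ to $D'$ contains at least $\frac12(\Phi(K_1)+\Phi(K_2))$ moves of type R\IIIa. So it suffices to produce $D_n$ with $\Phi(K_1)+\Phi(K_2)\ge 2n$.

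For the construction I would generalize the threading picture of Figure~\ref{fig-r3-ex}, following the introduction's hint that Jablonowski's diagrams $D_{n,k}$ are the source. Take $K_2$ to be a diagram of the unknot with many kinks or a twisted band, so that it bounds an immersed-looking disk with $n$ visible ``pockets''. Let $K_1$ thread through these pockets successively, so that along $K_1$ the non-self crossings alternate $O,U,O,U,\dots$ for $2n$ non-self crossings, giving $f(K_1)=(OU)^{n}$. In each pocket $K_1$ passes over one strand of $K_2$ and under the next, so the threading is geometrically unlinked. Because no two consecutive letters coincide, the reduction that deletes $OO$ and $UU$ pairs does nothing, and $\Phi(K_1)=n$, presumably by the precise definition in Section~\ref{section-core-bridge}. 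Alternatively, taking $n$ copies of the configuration of Figure~\ref{fig-r3-ex} placed in sequence along a band gives $\Phi(K_1)=2$ per copy if the copies can be concatenated without cancellation. Doubling the count, e.g.\ using $2n$ pockets, secures the bound $n$ regardless of the factor $\frac12$.

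The key steps, in order, are as follows. First, draw $D_n$ explicitly, for instance as a chain of $n$ copies of the left diagram of Figure~\ref{fig-r3-ex}, with $K_2$'s self-crossings arranged in a row and $K_1$ weaving through them. Second, verify that $D_n$ represents the trivial 2-component link. I would do this by exhibiting an explicit sequence of Reidemeister moves, namely one R\IIIa per pocket followed by R\II and RI moves, that undoes each threading independently; by induction on $n$ this reduces $D_n$ to a trivial diagram. Third, read off $f(K_1)$ and $f(K_2)$ and compute $\Phi$ directly from the definition. Finally, apply Theorem~\ref{thm-unordered}.

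The main obstacle is the third step: ensuring that the concatenated OU word really has no cancellations. Adjacent copies may produce consecutive equal letters at the junctions, such as $\dots U\,|\,U\dots$, which the reduction would remove. I would first try orienting and placing consecutive copies so that junctions meet as $U\,|\,O$, or use mirror images alternately to achieve this. If that fails, I would fall back on Jablonowski's family $D_{n,k}$ and compute its non-self OU sequence directly, choosing the parameters so that it is alternating of length growing with $n$.
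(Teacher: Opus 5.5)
\textbf{The gap: the proposal never produces the required diagram.} Your reduction is the same as the paper's. A trivial diagram has $\Phi=0$ on both components, so by Theorem~\ref{thm-unordered} it suffices to exhibit a diagram of the trivial 2-component link with $\Phi(K_1)+\Phi(K_2)\ge 2n$. That diagram is the entire content of the theorem, and your proposal never actually produces one. The construction is left at ``presumably'' and ``if the copies can be concatenated without cancellation.'' The junction problem you flag is left open, and the fallback to $D_{n,k}$ is named but not computed.

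\textbf{Your triviality criterion is false.} You argue that in each pocket $K_1$ passes over one strand of $K_2$ and under the next, so the threading is unlinked. Let $K_2$ be a figure-eight-shaped diagram of the unknot with one crossing $c$. Let $K_1$ be a small circle around $c$ that passes \emph{over} both arms of the over-strand at $c$ and \emph{under} both arms of the under-strand. Then $K_1$ runs through each lobe of $K_2$ over one strand and under the other, and $f(K_1)=OUOU$ is alternating. Yet the whole diagram is reduced and alternating with five crossings, so it is a non-trivial link (the Whitehead link). Each of the four triangles at $c$ is cyclic, so no R\III move is even available; your planned unthreading by ``one R\IIIa per pocket'' cannot start. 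The opposite choice, with $K_1$ sandwiched under the over-strand and over the under-strand, is trivial. The two diagrams differ only at the self-crossing $c$, so the non-self OU sequences cannot distinguish them. The over/under choices at the self-crossings must therefore be checked explicitly.

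\textbf{How the paper fills this.} It uses your fallback. For $n\ge 2$ and odd $k\ge 3$, $D_{n,k}$ represents the trivial link by Jablonowski's result. Reading the non-self OU sequences from base points, $f(K_1)$ reduces to an alternating word of length $4n$, so $\Phi(K_1)=2n$, while $f(K_2)$ reduces to $\emptyset$. Theorem~\ref{thm-unordered} then gives the bound $n$, and $n=1$ is covered by $D_{2,k}$.

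\textbf{Your chain idea can also be made rigorous, without citing Jablonowski.}
\begin{enumerate}
\item Let $K_2$ be a twisted-band diagram of the unknot with crossings $c_1,\dots,c_n$ in a row, all drawn alike.
\item Put a small circle around each $c_j$ passing under the over-strand and over the under-strand at $c_j$.
\item Join consecutive circles by planar bands lying inside the bigons between the $c_j$.
\end{enumerate}
The resulting $K_1$ is a simple closed curve bounding a planar disk. Over each small circle the over-strand can be placed above this disk and the under-strand below it, and $K_2$ never lies over the bands, so $K_2$ misses the disk. Hence the link is split and therefore trivial. Traversing $K_1$ gives $f(K_1)=(UO)^{2n}$, with no cancellation at the junctions, so $\Phi(K_1)=2n$. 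Also, $f(K_2)$ is a concatenation of $OO$ and $UU$ blocks, so $\Phi(K_2)=0$. Some explicit verification of this kind is indispensable, and your proposal contains none.
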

\medskip

The rest of the paper is organized as follows. 
In Section~\ref{section-OU-sequence}, we define the non-self OU sequence of a link diagram. 
In Section~\ref{section-core-bridge}, we introduce the OU number and discuss its properties. 
In Section~\ref{section-r3}, we prove Theorems \ref{thm-r3a}--\ref{thm-unordered}. 
In Section~\ref{section-example}, we see examples and prove Theorem \ref{thm-infty}.

\section{The non-self OU sequence of link diagrams}
\label{section-OU-sequence}

In this section, we introduce the definition of the non-self OU sequence for link diagrams. 

\begin{definition}
Let $D=K_1 \cup K_2 \cup \dots \cup K_r$ be a diagram on $S^2$ of an oriented ordered $r$-component link $L$. 
When traversing a link diagram, we encounter crossings either as an over-crossing (denoted by ``$O$'') or an under-crossing (denoted by ``$U$''). 
By focusing on the non-self crossings, we define the non-self OU sequence. 
By traversing $K_i$ with its given orientation and recording the over/under information of {\it only the non-self crossings} that are encountered, we obtain a cyclic sequence, called the {\it non-self OU sequence of $K_i$} and denoted by $f(K_i)$. 
An example is shown in Figure \ref{fig-OU-l}. 
\end{definition}
\begin{figure}[ht]
\centering
\includegraphics[width=2.7cm]{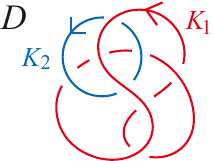}
\caption{A 2-component link diagram $D=K_1 \cup K_2$ with non-self OU sequences $f(K_1)=O^2 U^2$ and $f(K_2)=OUOU$. Note that self crossings are not counted.}
\label{fig-OU-l}
\end{figure}

\medskip 
\begin{proposition}
The length of a non-self OU sequence $f(K_i)$ is an even number for each component $K_i$ of a link diagram $D=K_1 \cup K_2 \cup \dots \cup K_r$. 
\label{prop-card}
\end{proposition}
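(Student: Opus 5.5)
The length of $f(K_i)$ is the number of non-self crossings met while traversing $K_i$. Each such crossing is a point where $K_i$ crosses some other component $K_j$ with $j \neq i$, and $K_i$ passes through it exactly once. So the length of $f(K_i)$ equals $\sum_{j \neq i} c(K_i, K_j)$, where $c(K_i,K_j)$ denotes the number of crossings between $K_i$ and $K_j$. It therefore suffices to show that $c(K_i,K_j)$ is even for every pair $i \neq j$.

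This is the classical parity fact that two closed curves in general position on $S^2$ (or $\mathbb{R}^2$) meet in an even number of points. I would prove it by a Jordan curve argument that handles the self-crossings of $K_j$.

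Ignore the over/under information and regard $K_j$ as an immersed closed curve. Its complement in $S^2$ admits a checkerboard two-colouring: adjacent regions, separated by an arc of $K_j$, get opposite colours. Such a colouring exists because every vertex of the projection of $K_j$ is 4-valent. Equivalently, colour a point $p$ by the parity of the number of intersections of a generic arc from a fixed base point to $p$ with $K_j$. This is well defined because any closed generic loop meets $K_j$ an even number of times, which is the mod-2 intersection number on $S^2$, trivial since $H_1(S^2;\mathbb{Z}/2)=0$.

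Now traverse $K_i$ starting from a point not on $K_j$. Each transverse crossing with $K_j$ switches the colour, while self-crossings of $K_i$ and crossings with other components $K_k$, $k\neq j$, do not affect it. Since we return to the starting point with its original colour, the number of switches $c(K_i,K_j)$ is even. We may assume no crossing is a triple point, by regularity of the diagram.

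The only step needing care is the well-definedness of the two-colouring for a curve with self-intersections. I would justify it either by the $H_1(S^2;\mathbb{Z}/2)=0$ remark above, or combinatorially by smoothing each self-crossing of $K_j$ into a disjoint union of simple closed curves and applying the Jordan curve theorem to each. Summing over $j\neq i$ then shows that the length of $f(K_i)$ is even.
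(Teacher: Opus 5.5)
Your proof is correct and is essentially the paper's argument. The paper checkerboard-colours the complement of $K_i$ and notes that each shaded region meets the other components in an even number of boundary points, since arcs of the other components inside it pair up their endpoints; you instead colour the complement of each $K_j$ and count colour switches along $K_i$, which is the same parity argument with the roles of the two components exchanged, and you additionally justify the colouring and isolate the pairwise fact that each $c(K_i,K_j)$ is even.
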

\medskip 

\begin{proof}
Apply a checkerboard coloring to $K_i$. 
Then the set of all the non-self crossings on $K_i$ is equivalent to the set of all the intersection points between the boundaries of the shaded regions of $K_i$ and the other components. 
Since each shaded region has an even number of intersection points with other components on the boundary, the cardinality of the set is an even number. 
Hence, $K_i$ has an even number of non-self crossings, and therefore the length of $f(K_i)$ is an even number. 
\end{proof}
\medskip

\noindent The following lemma describes a relation between the number of $O$'s and the linking number. 

\medskip 
\begin{lemma}
Let $K_i$ be a component of an oriented link diagram $D=K_1 \cup K_2 \cup \dots \cup K_r$. 
Let $\# O( f(K_i))$ \textup{(}resp. $\# U( f(K_i))$\textup{)} be the number of occurrences of $O$ \textup{(}resp. $U$\textup{)} in $f(K_i)$. 
Then
\begin{itemize}
\item[$\bullet$] $\displaystyle \left\lvert \sum_{j \neq i} lk (K_i, K_j) \right\rvert \leq \min \left\{ \# O( f(K_i)), \ \# U( f(K_i)) \right\}$ and 
\item[$\bullet$] $\displaystyle \sum_{j \neq i} lk (K_i, K_j) \equiv \# O( f(K_i)) \pmod{2}$.
\end{itemize}
\label{lem-lk-mod}
\end{lemma}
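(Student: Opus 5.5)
We will use the standard formula for the linking number in terms of crossing signs: for $j \neq i$,
\begin{align*}
lk(K_i,K_j) = \sum_{c \in O_{ij}} \varepsilon(c) = \sum_{c \in U_{ij}} \varepsilon(c),
\end{align*}
where $O_{ij}$ (resp.\ $U_{ij}$) is the set of crossings at which $K_i$ passes over (resp.\ under) $K_j$, and $\varepsilon(c) = \pm 1$ is the sign of $c$. The equality of the two sums, each being half of $\sum_{c \in K_i \cap K_j} \varepsilon(c)$, is classical: for instance, crossing changes at all of $O_{ij}$ make $K_i$ lie entirely below $K_j$, so the result is split and has linking number $0$, and each change alters the total signed sum by $\mp 2$. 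We take this as the starting point.

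Summing over $j \neq i$ gives
\begin{align*}
\sum_{j\neq i} lk(K_i,K_j) = \sum_{c \in O_i} \varepsilon(c) = \sum_{c \in U_i} \varepsilon(c),
\end{align*}
where $O_i = \bigcup_{j \ne i} O_{ij}$ and $U_i = \bigcup_{j\ne i} U_{ij}$ are exactly the crossings recorded as $O$ and as $U$ in $f(K_i)$. Hence $|O_i| = \# O(f(K_i))$ and $|U_i| = \# U(f(K_i))$.

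For the first bullet, the triangle inequality gives
\begin{align*}
\left|\sum_{j\ne i} lk(K_i,K_j)\right| \le |O_i|
\end{align*}
and likewise $\le |U_i|$, hence the bound by the minimum. For the second bullet, a sum of $|O_i|$ terms each equal to $\pm 1$ is congruent to $|O_i|$ modulo $2$.

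The only substantive point is justifying that the over-crossings alone compute $lk$. If the paper prefers to avoid the crossing-change argument, the fallback is invariance of $\sum_{c\in O_{ij}}\varepsilon(c)$ under Reidemeister moves. RI involves only self-crossings. RII creates two crossings of opposite sign, either both in $O_{ij}$ or both not. RIII preserves the set of crossings together with their signs and over/under roles. Comparing with the symmetric formula then finishes the argument.
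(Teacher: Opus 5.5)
Your proof is correct and follows essentially the same route as the paper. Crossing changes at the non-self over-crossings of $K_i$ make $K_i$ split from the other components, so the total linking number becomes $0$, and each change shifts it by $\pm 1$. The only difference is that you record this as the exact identity $\sum_{j\neq i} lk(K_i,K_j)=\sum_{c\in O_i}\varepsilon(c)$ before applying the triangle inequality and the parity count, whereas the paper reads both bullets off the crossing-change count directly.
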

\medskip 

\begin{proof}
Let $m$ be the number of non-self crossings on $K_i$. 
Then $m= \# O( f(K_i)) + \# U( f(K_i))$ since $K_i$ has $\# O( f(K_i))$ non-self over-crossings and $\# U( f(K_i))$ non-self under-crossings. 
Transform $D$ into a link diagram $D' =K'_1 \cup K'_2 \cup \dots \cup K'_r$ such that $f(K'_i)=U^m$ (resp.~$O^m$) by $\# O( f(K_i))$ (resp.~$\# U( f(K_i))$) crossing changes at the non-self over-crossings (resp.~under-crossings) of $K_i$. 
In $D'$, the component $K'_i$ is under-most (resp.~over-most) since every non-self crossing on $K'_i$ is an under-crossing (resp.~over-crossing). 
We have $\sum_{j \neq i} lk (K'_i, K'_j)=0$ because the component $K'_i$ is split from the other components. 
Since each crossing change at a non-self crossing shifts the linking number by $+1$ or $-1$, we have the inequality and the congruence. 
Note that $\# U( f(K_i)) = m- \# O( f(K_i)) \equiv \# O( f(K_i)) \pmod{2}$ since $m$ is even by Proposition \ref{prop-card}. 
\end{proof}

\section{The OU number}
\label{section-core-bridge}

In this section, we introduce the OU number.

\subsection{Definition of the OU number for sequences}
\label{subsection-sigma-sequence}

In this subsection, we define the OU number for non-cyclic and cyclic sequences of $O$ and $U$.  

\medskip 
\begin{definition}
Let $S=X_1 X_2 \cdots X_m$ be a non-cyclic sequence, where $X_i=O$ or $U$ ($i=1, 2, \dots , m$). 
We give the sign $\varepsilon$ for each letter $X_i$ in $S$ in the following manner. 
\begin{align*}
\varepsilon (X_i) =  \left\{
\begin{array}{ll}
+1 & \text{ (if } X_i=O \text{ and } i \text{ is even or } X_i=U \text{ and } i \text{ is odd)} \\
-1 & \text{ (if } X_i=O \text{ and } i \text{ is odd or } X_i=U \text{ and } i \text{ is even)}
\end{array}
\right.
\end{align*}
Let $\Phi (S)= \frac{1}{2} \sum_{i=1}^m \varepsilon (X_i)$. 
That is, the value of $\Phi (S)$ is half the number of occurrences of $O$ in even positions and $U$ in odd positions minus the number of occurrences of $O$ in odd positions and $U$ in even positions. 
We call $\Phi (S)$ the {\it OU number of $S$}. 
\end{definition}
\medskip

\begin{example}
For the sequence $S=OU^3OU$, we have the OU number $\Phi (S)=\Phi (OUUUOU)=\frac{1}{2}(1-5)=-2$ since $S$ has one $U$ in an odd position, two $O$'s in odd positions, and three $U$'s in even positions. 
\label{ex-S}
\end{example}
\medskip

\noindent We call the sequence $OUO \cdots U$ an {\it alternating sequence}. 
We have the following formula. 

\medskip 
\begin{example}
Let $S=OUO \cdots U$ be an alternating sequence of an even length $m$. 
Then we have $\Phi (S)= -\frac{m}{2}$. 
\label{ex-sigma-m}
\end{example}
\medskip 

\noindent We have the following proposition regarding the reverse operation. 

\medskip 
\begin{proposition}
Let $S$ be a sequence of $O$ and $U$ with length $m$. 
Let $S'$ be the sequence obtained from $S$ by reversing the order of letters. 
Then $\Phi (S')=-(-1)^m \Phi (S)$. 
\label{prop-sigma-reversed}
\end{proposition}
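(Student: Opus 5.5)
Reversing sends the letter in position $i$ of $S$ to position $m+1-i$ of $S'$, so the whole proof reduces to tracking how $\varepsilon$ changes for a single letter under this reindexing, then summing.

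Write $S = X_1 X_2 \cdots X_m$ and $S' = Y_1 Y_2 \cdots Y_m$ with $Y_j = X_{m+1-j}$. By the definition, $\varepsilon$ depends only on the letter and the parity of its position. Flipping the parity of the position while keeping the letter negates $\varepsilon$; keeping the parity leaves $\varepsilon$ unchanged. It is convenient to record this as the closed formula $\varepsilon(X_i) = (-1)^{i} s(X_i)$, where $s(O)=+1$ and $s(U)=-1$. One checks this against the definition: an $O$ in an even position gets $+1$, and a $U$ in an odd position gets $(-1)(-1)=+1$.

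The parities of $i$ and $m+1-i$ agree exactly when $m+1$ is even, that is, when $m$ is odd. Using the formula, the letter $X_i$ sitting in position $j = m+1-i$ of $S'$ has sign
\begin{align*}
(-1)^{m+1-i} s(X_i) = (-1)^{m+1} (-1)^{-i} s(X_i) = -(-1)^m \varepsilon(X_i),
\end{align*}
since $(-1)^{-i} = (-1)^i$.

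Summing over all $i$ and multiplying by $\frac{1}{2}$ then gives $\Phi(S') = -(-1)^m \Phi(S)$. When $m$ is even every sign flips; when $m$ is odd every sign is preserved.

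No step is a real obstacle. The only care needed is the parity bookkeeping in the exponent $(-1)^{m+1-i}$, and the closed form for $\varepsilon$ avoids a four-way case split.
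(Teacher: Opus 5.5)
Your proof is correct and takes essentially the same approach as the paper: both pair position $i$ with position $m+1-i$ and observe that these parities agree exactly when $m$ is odd, so every sign is preserved for odd $m$ and negated for even $m$. Your closed form $\varepsilon(X_i)=(-1)^i s(X_i)$ is a compact way of packaging that same parity bookkeeping.
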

\medskip 

\begin{proof}
When $m$ is odd, each pair of corresponding letters in $S$ and $S'$ has the same sign, 
since the corresponding positions have the same parity.
When $m$ is even, each pair has opposite signs, since the corresponding positions have opposite parity. 
\end{proof}
\medskip 

\noindent We have the following proposition regarding the rotation. 

\medskip 
\begin{proposition}
Let $S=XY \cdots Z$ be a sequence of $O$ and $U$ of even length $m$. 
Let $S'=Y \cdots ZX$ be the sequence obtained from $S$ by a rotation moving the initial letter $X$ to the terminal position. 
Then $\Phi (S')=- \Phi (S)$. 
\label{prop-rot}
\end{proposition}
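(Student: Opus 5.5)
We compare the signs letter by letter, directly from the definition of $\varepsilon$. Write $S=X_1X_2\cdots X_m$, so that $S'=X'_1X'_2\cdots X'_m$ with $X'_j=X_{j+1}$ for $1\le j\le m-1$ and $X'_m=X_1$.

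First consider the letters $X_2,\dots,X_m$. In $S'$ the letter $X_i$ (for $i\ge 2$) occupies position $i-1$. Its position has changed parity, so its sign flips: $\varepsilon_{S'}(X'_{i-1})=-\varepsilon_S(X_i)$. The rule for $\varepsilon$ depends only on the letter and the parity of its position, and changing the parity with the letter fixed swaps $+1$ and $-1$.

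Next consider the initial letter $X$. In $S$ it sits at position $1$, which is odd. In $S'$ it sits at position $m$, which is even because $m$ is assumed even. So its position also changes parity, and $\varepsilon_{S'}(X'_m)=-\varepsilon_S(X_1)$.

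Summing over all letters gives $\sum_j \varepsilon_{S'}(X'_j)=-\sum_i\varepsilon_S(X_i)$. Halving both sides yields $\Phi(S')=-\Phi(S)$. This is the only place where the evenness of $m$ is used. For odd $m$ the wrapped letter would keep its sign, and the identity would fail. No step here presents a real obstacle.
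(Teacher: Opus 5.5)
Your proof is correct and is essentially the paper's own argument: the letters $X_2,\dots,X_m$ move by $1$ position and $X_1$ moves by $m-1$, both odd when $m$ is even, so every sign flips. Your remark on odd $m$ also holds; in that case one gets $\Phi(S')=-\Phi(S)+\varepsilon_S(X_1)\neq-\Phi(S)$.
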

\medskip 

\begin{proof}
The signs of corresponding letters in $S$ and $S'$ are opposite because their positions are shifted by either 1 or $m-1$. 
\end{proof}
\medskip 

\noindent Note that Proposition \ref{prop-rot} does not hold when $m$ is odd. 
For example, we have $\Phi (OUOUO)= - \frac{5}{2}$ and $\Phi (UOUOO)= \frac{3}{2}$. 
For cyclic sequences of even length, we define the OU number as follows. 

\medskip 
\begin{definition}
Let $w$ be a cyclic sequence of $O$ and $U$ of even length. 
We define the {\it OU number $\Phi (w)$ of $w$} by $| \Phi (S) |$, where $S$ is a non-cyclic sequence that is obtained from $w$. 
\end{definition}
\medskip 

\noindent Note that $\Phi (w)$ is well-defined by Proposition \ref{prop-rot}. 

\medskip 
\begin{example}
For a cyclic sequence $w=OU^3OU$, we have $\Phi (w)=2$. 
\end{example}
\medskip 

\begin{example}
Let $w=OUO \cdots U$ be an alternating cyclic sequence of an even length $m$. 
By Example \ref{ex-sigma-m}, we have $\Phi (w)= \frac{m}{2}$. 
\end{example}
\medskip 

\noindent By definition, we have the following proposition. 

\medskip 
\begin{proposition}
Let $w$ be a cyclic sequence of $O$ and $U$ with an even length $m$. 
Then $\Phi (w) \leq \frac{m}{2}$. 
The equality holds if and only if $w$ is an alternating sequence. 
\label{prop-OU-length}
\end{proposition}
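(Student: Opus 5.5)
Choose a non-cyclic sequence $S=X_1X_2\cdots X_m$ obtained by cutting $w$ anywhere. By the definition of the OU number of a cyclic sequence (well defined by Proposition~\ref{prop-rot}), $\Phi(w)=|\Phi(S)|$. By the definition of $\Phi(S)$,
\begin{align*}
\Phi(S)=\frac{1}{2}\sum_{i=1}^m \varepsilon(X_i),
\end{align*}
where each $\varepsilon(X_i)\in\{+1,-1\}$. The triangle inequality then gives $|\Phi(S)|\le \frac{1}{2}\sum_{i=1}^m|\varepsilon(X_i)|=\frac{m}{2}$, which is the inequality.

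For the equality case, $|\Phi(S)|=\frac{m}{2}$ holds if and only if all the signs $\varepsilon(X_i)$ are equal: either all $+1$ or all $-1$. We translate this with the sign rule. All signs $-1$ means $X_i=O$ at every odd $i$ and $X_i=U$ at every even $i$, so $S=OUOU\cdots OU$. All signs $+1$ means $X_i=U$ at odd $i$ and $O$ at even $i$, so $S=UOUO\cdots UO$. Since $m$ is even, each of these closes up to the alternating cyclic sequence $OUO\cdots U$, and the cyclic adjacency between $X_m$ and $X_1$ also alternates. Conversely, if $w$ is alternating, then Example~\ref{ex-sigma-m} together with Proposition~\ref{prop-rot} gives $\Phi(w)=\frac{m}{2}$. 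This matches the example stated just before the proposition.

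No step is a real obstacle. The only point needing care is that evenness of $m$ is used twice: once so that $\Phi(w)$ is well defined, and once so that the alternating linear word wraps around consistently as a cyclic sequence.
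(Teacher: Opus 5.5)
Your proof is correct and matches the paper's approach: the paper states the proposition holds ``by definition,'' meaning each of the $m$ signs is $\pm 1$, so $|\Phi(S)|\le \frac{m}{2}$, with equality exactly when all signs agree, i.e.\ $S=OUOU\cdots OU$ or $UOUO\cdots UO$. You have simply written out these details, including the wrap-around check that uses $m$ even.
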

\medskip 

\noindent The following proposition will be used in Section~\ref{section-r3}. 

\medskip 
\begin{proposition}
Let $w$ be a cyclic sequence of $O$ and $U$ of even length that has at least one pair ``$OU$''. 
Let $w'$ be a sequence that is obtained from $w$ by replacing one pair ``$OU$'' with ``$UO$''. 
Then $|\Phi (w')- \Phi (w) | =0$ or $2$. 
In particular, we have $|\Phi (w')- \Phi (w) | =2$ when $\Phi (w) \neq 1$. 
\label{prop-ou-uo}
\end{proposition}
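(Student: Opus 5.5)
We cut the cyclic sequence $w$ open so that the pair ``$OU$'' to be replaced occupies consecutive positions inside a non-cyclic sequence $S$. Since $w$ is cyclic, we may rotate it so that this pair sits at positions $1$ and $2$. Write $S = OU\,T$ for the resulting non-cyclic sequence of even length $m$, where $T$ has length $m-2$. The replacement then gives $S' = UO\,T$, cut at the same place. By the definition of $\Phi$ for cyclic sequences (well-defined by Proposition~\ref{prop-rot}), we have $\Phi(w)=|\Phi(S)|$ and $\Phi(w')=|\Phi(S')|$.

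The signs of the letters of $T$ are the same in $S$ and $S'$, so only the first two letters contribute to the change. In $S$, the $O$ in position $1$ has sign $-1$ and the $U$ in position $2$ has sign $-1$, for a contribution of $-2$. In $S'$, the $U$ in position $1$ has sign $+1$ and the $O$ in position $2$ has sign $+1$, for a contribution of $+2$. Hence
\begin{align*}
\Phi(S') = \Phi(S) + \tfrac{1}{2}\bigl(2-(-2)\bigr) = \Phi(S)+2 .
\end{align*}

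Put $a=\Phi(S)$. Then $|\Phi(w')-\Phi(w)| = \bigl||a+2|-|a|\bigr|$. Because $m$ is even, the sum of the $\varepsilon$'s is even, so $a\in\mathbb{Z}$. We split into cases on $a$.
\begin{itemize}
\item If $a\ge 0$, then $|a+2|-|a| = 2$.
\item If $a\le -2$, then $|a+2|-|a| = -2$.
\item If $a=-1$, then $|a+2|=|a|=1$, so the difference is $0$.
\end{itemize}
So the difference is always $0$ or $2$. It can be $0$ only when $a=-1$, and then $\Phi(w)=|a|=1$. Therefore $\Phi(w)\neq 1$ forces the difference to be $2$.

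The only delicate point is the bookkeeping for the cut. The rotation used to place the pair at the front must be applied identically to $w$ and $w'$, so that $T$ sits at the same positions in $S$ and $S'$. The evenness of the length is exactly what makes $\Phi(w)$ independent of where we cut (Proposition~\ref{prop-rot}), and it is also what forces $a$ to be an integer.
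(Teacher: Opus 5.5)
Your proof is correct and follows essentially the same route as the paper: you show that swapping the adjacent $OU$ shifts $\Phi(S)$ by $\pm 2$ using the signs of the two letters, then do a case analysis on the integer $\Phi(S)$ to compare absolute values. The only cosmetic difference is that you rotate the cut so the pair sits at positions $1,2$, which fixes the shift to $+2$ and leaves the single exceptional case $\Phi(S)=-1$; the paper instead keeps a general position $i$ and handles both signs of the shift.
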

\medskip 

\begin{proof}
Let $S= X_1 X_2 \cdots X_m$ be a non-cyclic sequence obtained from $w$ that has at least one pair ``$OU$''. 
Suppose that ``$X_i X_{i+1}$'' is ``$OU$''. 
Replace this ``$OU$'' with ``$UO$''. 
Then the pair of signs of $(X_i, X_{i+1})$ is changed from $(+1, +1)$ to $(-1, -1)$ (resp.~from $(-1, -1)$ to $(+1, +1)$) when $i$ is even (resp.~odd). 
Hence, $\Phi (S')= \Phi (S) \pm 2$ for the obtained sequence $S'$. 
Then we have 
$$\Phi (w')- \Phi (w)= | \Phi (S')|- |\Phi (S)|= |\Phi (S) \pm 2 | - | \Phi (S)|.$$
Note that the value of $\Phi (S)$ is an integer since the length of $S$ is even. 
\begin{itemize}
\item[$\bullet$] Suppose that $\Phi (S)= \pm 1$. Then $|\Phi (S) \pm 2 | - | \Phi (S)| = 0$ or $2$. 
\item[$\bullet$] Suppose that $\Phi (S)= 0$. Then $|\Phi (S) \pm 2 | - | \Phi (S)| = 2$. 
\item[$\bullet$] Suppose that $\Phi (S) \leq -2$. Then $|\Phi (S) \pm 2 | - | \Phi (S)| = -(\Phi (S) \pm 2)  - (- \Phi (S)) = \mp 2$. 
\item[$\bullet$] Suppose that $\Phi (S) \geq 2$. Then $|\Phi (S) \pm 2 | - | \Phi (S)| = (\Phi (S) \pm 2)  -  \Phi (S) = \pm 2$. 
\end{itemize}
Hence, we have $|\Phi (w') - \Phi (w)|=2$ when $\Phi (w) \neq 1$ and $|\Phi (w') - \Phi (w)|=0$ or $2$ when $\Phi (w) = 1$. 
\end{proof}
\medskip 

\noindent For the case of $\Phi (w) =1$, the following example shows both cases of $| \Phi (w') -\Phi (w) | =0$ and $2$. 

\medskip 
\begin{example}
For the pair of non-cyclic sequences $S=(OU)OO$ and $S'=(UO)OO$, we have $\Phi (S)=-1$ and $\Phi (S')=1$, and $\Phi (w)= \Phi (w') =1$ for the corresponding cyclic sequences. 
For the pair of non-cyclic sequences $T=(OU)UOUO$ and $T'=(UO)UOUO$, we have $\Phi (T)=1$ and $\Phi (T')=3$, and then $\Phi (v)=1$ and $|\Phi (v') - \Phi (v)|=2$ for the corresponding cyclic sequences. 
\end{example}
\medskip

\subsection{Reduction}
\label{subsection-contraction}

In this subsection, we introduce an operation, the ``reduction'' for sequences of $O$ and $U$ to make the calculation of $\Phi$ easier. 

\medskip 
\begin{definition}
For a (cyclic or non-cyclic) sequence of $O$ and $U$, a {\it reduction} is an operation removing two consecutive letters ``$OO$'' or ``$UU$''. 
\end{definition}
\medskip 

\begin{example}
We obtain $OUOU$ from $OUUUOU$ by a reduction. 
\end{example}
\medskip 

\begin{proposition}
Let $S'$ be a non-cyclic sequence of $O$ and $U$ that is obtained from a non-cyclic sequence $S$ by a reduction. 
Then $\Phi (S')= \Phi (S)$. 
\label{prop-con-sigma}
\end{proposition}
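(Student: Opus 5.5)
Write $S = X_1 \cdots X_{i-1}\, X_i X_{i+1}\, X_{i+2} \cdots X_m$, where $X_i X_{i+1}$ is the removed pair ($OO$ or $UU$), and let $S' = X_1 \cdots X_{i-1} X_{i+2} \cdots X_m$. The plan is a direct computation of the signed sum in the definition of $\Phi$, comparing $S$ and $S'$ letter by letter in three blocks: the prefix $X_1 \cdots X_{i-1}$, the removed pair, and the suffix $X_{i+2}\cdots X_m$.

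First, the removed pair contributes zero to $\Phi(S)$. The two letters $X_i$ and $X_{i+1}$ are equal but sit in positions $i$ and $i+1$ of opposite parity. By the definition of $\varepsilon$, the sign of a letter depends only on the letter and the parity of its position, and switching parity flips the sign. Hence $\varepsilon(X_i) + \varepsilon(X_{i+1}) = 0$.

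Next, the prefix letters occupy the same positions in $S$ and in $S'$, so their signs are unchanged. Each suffix letter $X_j$ with $j \geq i+2$ moves to position $j-2$ in $S'$. This shift preserves parity, so its sign is also unchanged.

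Summing the three blocks gives $\sum \varepsilon$ over $S$ equal to $\sum \varepsilon$ over $S'$, and halving yields $\Phi(S') = \Phi(S)$. No step is a real obstacle; the only point needing care is the observation that a shift by $2$ preserves parity while a shift by $1$ reverses it. The edge cases $i = 1$ and $i+1 = m$, where the prefix or the suffix is empty, are covered by the same argument.
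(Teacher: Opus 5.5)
Your proposal is correct and follows essentially the same approach as the paper. The removed pair contributes $\varepsilon(X_i)+\varepsilon(X_{i+1})=0$ because the letters are equal and sit at positions of opposite parity, and every remaining letter keeps its sign because it either stays put or shifts by $2$.
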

\medskip 

\begin{proof}
Let $S=X_1 X_2 \cdots X_m$. 
Suppose that $S'$ is obtained from $S$ by a reduction removing $X_i X_{i+1}$. 
The signs of $X_i$ and $X_{i+1}$ in $S$ are opposite since they are the same letter $O$ or $U$ and have opposite parities of the positions. 
For other letters, the signs of $X_j$ ($j<i$ or $i+1 <j$) are the same in $S$ and $S'$ because the parity of its position is preserved by the reduction. 
Hence, we obtain $\Phi (S')= \Phi (S)$. 
\end{proof}
\medskip 

\noindent Note that the parity of the length of a sequence is preserved under reduction. 
The following corollary follows from Proposition \ref{prop-con-sigma}. 

\medskip 
\begin{corollary}
Let $w'$ be a cyclic sequence of $O$ and $U$ that is obtained from a cyclic sequence $w$ of even length by a reduction. 
Then $\Phi (w')= \Phi (w)$. 
\label{cor-con-sigma}
\end{corollary}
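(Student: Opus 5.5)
We reduce the cyclic case to the non-cyclic one, Proposition~\ref{prop-con-sigma}, by choosing a convenient cut point. Suppose $w'$ is obtained from $w$ by removing two cyclically consecutive equal letters $XX$, where $X=O$ or $U$. Let $m$ be the (even) length of $w$.

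There are two possible positions for the removed pair. We choose the cut point of $w$ so that the removed pair appears as consecutive letters $X_iX_{i+1}$, with $1\le i\le m-1$, of a non-cyclic sequence $S=X_1X_2\cdots X_m$ read from $w$. This is always possible, since any two cyclically adjacent letters can be made non-wrapping by starting the reading at one of them or earlier. With this choice, the non-cyclic sequence $S'$ obtained from $S$ by deleting $X_iX_{i+1}$ is a sequence read from $w'$ by cutting $w'$ at the same place.

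The lengths of $S$ and $S'$ are $m$ and $m-2$, both even. Hence the definition of $\Phi$ for cyclic sequences applies to both, and it is well defined by Proposition~\ref{prop-rot}. Therefore
\[
\Phi(w')=|\Phi(S')|=|\Phi(S)|=\Phi(w),
\]
where the middle equality is Proposition~\ref{prop-con-sigma}.

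The only point needing care is the case where the removed pair is the wrapping pair $X_mX_1$ of some chosen reading. We avoid it by rotating the reading by one step before cutting. Since the independence of $|\Phi(S)|$ from the cut point holds for even length, this rotation changes nothing. A degenerate case is $m=2$, where $w'$ is empty; then $\Phi(w')=0$ by convention, and $\Phi(OO)=\Phi(UU)=0$ directly from the signs, so the equality holds here too.
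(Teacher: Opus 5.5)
Your proof is correct and is essentially the argument the paper intends when it says the corollary follows from Proposition~\ref{prop-con-sigma}. You cut $w$ so that the removed pair does not wrap around, apply Proposition~\ref{prop-con-sigma} to the resulting linear reading, and pass to absolute values using the well-definedness given by Proposition~\ref{prop-rot}. Your separate treatment of $m=2$ is fine but not needed, since Proposition~\ref{prop-con-sigma} already covers it: the empty sum gives $\Phi(\emptyset)=0$.
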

\medskip 

\noindent Proposition \ref{prop-con-sigma} and Corollary \ref{cor-con-sigma} make the calculation of the OU number easier as shown in the following example. 

\medskip 
\begin{example}
By reductions, we can compute the OU number as $$\Phi (OOUOOUOU)=\Phi (UOOUOU)= \Phi (UUOU)= \Phi (OU)=1.$$
\end{example}
\medskip 

\noindent By applying reductions repeatedly (in any order) to a cyclic sequence $w$ of even length, we obtain an alternating sequence of even length, and the number of the pairs ``$OU$'' coincides with the value of the OU number $\Phi (w)$.

\subsection{The OU number of link diagrams}
\label{subsection-sigma-link}

We define the OU number for link diagrams.

\medskip 
\begin{definition}
Let $D=K_1 \cup K_2 \cup \dots \cup K_r$ be a diagram of an oriented ordered $r$-component link. 
Let $f(K_i)$ be the non-self OU sequence of $K_i$ ($i=1, 2, \dots , r$). 
We define the {\it OU number of $K_i$} as $\Phi (K_i)= \Phi (f(K_i))$. 
\end{definition}
\medskip 

\noindent When a component $K_i$ has no non-self crossings, we define $f(K_i)= \emptyset$, the empty sequence, with OU number zero. 
Note that $\Phi (K_i)$ is well-defined by Propositions \ref{prop-card} and \ref{prop-rot}. 
The value of $\Phi (K_i)$ is a non-negative integer. 

\medskip 
\begin{example}
The diagram $D=K_1 \cup K_2 \cup K_3$ of a trivial 3-component link shown in Figure \ref{fig-3} has $\Phi (K_1)=0$, $\Phi (K_2)=2$ and $\Phi (K_3)=0$. 
\begin{figure}[ht]
\centering
\includegraphics[width=3cm]{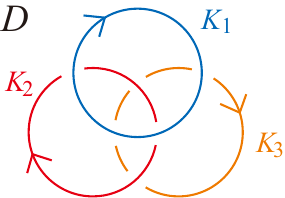}
\caption{A diagram $D=K_1 \cup K_2 \cup K_3$ with $f(K_1)=O^4$, $f(K_2)=OUOU$ and $f(K_3)=U^4$. }
\label{fig-3}
\end{figure}
\end{example}
\medskip 

\begin{example}
For a trivial diagram $D=K_1 \cup K_2 \cup \dots \cup K_r$, we have $\Phi (K_i)=0$ for each component $K_i$ since $f(K_i)$ is the empty sequence $\emptyset$. 
\end{example}
\medskip 

\noindent For orientation, the following proposition holds. 

\medskip 
\begin{proposition}
Let $D=K_1 \cup K_2 \cup \dots \cup K_r$ be a diagram of an oriented link. 
Let $D'=K'_1 \cup K'_2 \cup \dots \cup K'_r$ be the diagram obtained from $D$ by reversing the orientation of a component $K_j$. 
Then $\Phi (K'_i)=\Phi (K_i)$ for any $i$. 
\label{prop-sigma-ori}
\end{proposition}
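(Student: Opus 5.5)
Reversing the orientation of $K_j$ changes no crossing of the diagram and no over/under information; it changes only the direction in which $K_j$ is traversed. I would therefore split into two cases. If $i \neq j$, traversing $K'_i$ is the same as traversing $K_i$, and the set of non-self crossings on $K_i$ together with their $O$/$U$ labels is unchanged. Hence $f(K'_i)=f(K_i)$ as cyclic sequences, and $\Phi(K'_i)=\Phi(K_i)$ holds trivially.

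The only real case is $i=j$. Here $f(K'_j)$ is the cyclic sequence $f(K_j)$ read backwards. Note that the label at each crossing is still the same letter, because whether $K_j$ passes over or under is independent of its orientation. The key step is to choose a linear representative $S$ of $f(K_j)$, of length $m$. The reversed sequence $S'$ is then a linear representative of $f(K'_j)$. By Proposition~\ref{prop-card}, $m$ is even, so Proposition~\ref{prop-sigma-reversed} gives $\Phi(S')=-(-1)^m\Phi(S)=-\Phi(S)$. Taking absolute values, which is how $\Phi$ of a cyclic sequence is defined (well defined by Proposition~\ref{prop-rot}), we get $\Phi(K'_j)=|\Phi(S')|=|\Phi(S)|=\Phi(K_j)$.

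The degenerate case $f(K_j)=\emptyset$ is immediate, since both OU numbers are zero. I expect no real obstacle. The only point needing care is to observe explicitly that reversing orientation does not swap $O$ and $U$, but only reverses the order of the letters. Consequently Proposition~\ref{prop-sigma-reversed}, together with the evenness from Proposition~\ref{prop-card}, suffices. As a consequence, $\Phi(K_i)$ is also defined for unoriented link diagrams.
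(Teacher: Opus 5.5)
Your proof is correct and follows the paper's argument: for $i\neq j$ the sequence $f(K_i)$ is unchanged, and for $i=j$ it is read in reverse, so Proposition~\ref{prop-sigma-reversed} applies. You are slightly more explicit than the paper. You invoke Proposition~\ref{prop-card} to get even length, so that $\Phi(S')=-\Phi(S)$, and then note that taking absolute values gives the equality.
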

\medskip 

\begin{proof}
When $i \neq j$, we have $f(K'_i)= f(K_i)$. 
When $i=j$, $f(K'_i)$ is obtained from $f(K_j)$ by reversing the order of letters. 
We have $\Phi (K'_i)= \Phi (K_i)$ by Proposition \ref{prop-sigma-reversed}. 
\end{proof}
\medskip 

\noindent By Proposition \ref{prop-sigma-ori}, we can define the OU number $\Phi (K_i)$ for unoriented link diagrams $D=K_1 \cup K_2 \cup \dots \cup K_r$ as well. 
As a relation to the linking number, we have the following corollary. 

\medskip 
\begin{corollary}
Let $D=K_1 \cup K_2 \cup \dots \cup K_r$ be an oriented link diagram. 
We have $\Phi (K_i) \equiv \sum_{j \neq i} lk (K_i, K_j) \pmod{2}.$ 
That is, the parity of $\Phi (K_i)$ coincides with the parity of the total linking number of $K_i$ with other components. 
\label{cor-lk}
\end{corollary}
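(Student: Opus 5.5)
We need to show $\Phi(K_i) \equiv \#O(f(K_i)) \pmod 2$ and then invoke Lemma~\ref{lem-lk-mod}, which gives $\sum_{j\neq i} lk(K_i,K_j) \equiv \#O(f(K_i)) \pmod 2$. The plan is to compute $\Phi$ from any linear reading $S = X_1\cdots X_m$ of $f(K_i)$ and work mod 2. Here $m$ is even by Proposition~\ref{prop-card}, and $\Phi(K_i) = |\Phi(S)|$ by definition. Since $|x| \equiv x \pmod 2$ for integers, it suffices to show $\Phi(S) \equiv \#O(S) \pmod 2$. If $f(K_i)$ is empty, then $\Phi(K_i)=0$ and $\#O=0$, so we may assume $m \ge 2$.

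Write $a$ (resp.\ $b$) for the number of $O$'s in even (resp.\ odd) positions, and $c$ (resp.\ $d$) for the number of $U$'s in odd (resp.\ even) positions. Then
\[
2\Phi(S) = (a+c) - (b+d).
\]
There are $m/2$ odd positions, so $b + c = m/2$, and likewise $a + d = m/2$. Substituting $c = m/2 - b$ and $d = m/2 - a$ gives
\[
2\Phi(S) = a + m/2 - b - b - m/2 + a = 2(a-b),
\]
so $\Phi(S) = a - b$. Hence
\[
\Phi(S) \equiv a + b = \#O(S) \pmod 2.
\]

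Combining this with the second item of Lemma~\ref{lem-lk-mod} gives $\Phi(K_i) \equiv \sum_{j\neq i} lk(K_i,K_j) \pmod 2$. Reversing orientations does not change $\Phi$ by Proposition~\ref{prop-sigma-ori}, and it only changes the linking numbers by sign, which does not affect parity, so the orientation chosen is irrelevant.

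There is no real obstacle. The only point needing care is the identity $\Phi(S) = a - b$, which uses that $m$ is even so that the odd and even positions each number exactly $m/2$.
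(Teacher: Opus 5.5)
Your proof is correct, but it reaches the key congruence $\Phi(K_i)\equiv \#O(f(K_i)) \pmod{2}$ by a different route than the paper. The paper repeatedly deletes $OO$ or $UU$ from $f(K_i)$ until an alternating cyclic sequence $w'$ remains. Each such reduction preserves both the parity of $\#O$ and the value of $\Phi$ (Corollary~\ref{cor-con-sigma}). For the alternating sequence, the number of $O$'s equals $\Phi(w')$, and Lemma~\ref{lem-lk-mod} then finishes the argument.

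You instead use the evenness of $m$ to get the closed form $\Phi(S)=a-b$ directly from the definition: the number of $O$'s in even positions minus the number of $O$'s in odd positions. Then $|x|\equiv x \pmod{2}$ gives the congruence. Your argument is self-contained. It does not need the fact, only sketched in the paper, that reductions always end in an alternating sequence whose $O$-count is $\Phi$. Your identity also yields Proposition~\ref{prop-OU-length} at once, since $0\le a,b\le m/2$.

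The paper's route is shorter given the reduction machinery it has already built, and it keeps the intuitive picture of $\Phi$ as the number of surviving $OU$ pairs. Your closing remark about orientation is harmless but unnecessary, because the statement fixes an orientation.
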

\medskip 

\begin{proof}
Let $w'$ be an alternating sequence that is obtained from $f(K_i)$ by reductions. 
Since the number of $O$'s in $w'$, which equals $\Phi (K_i)$, is congruent to that in $f(K_i)$ modulo 2, we obtain the congruence by Lemma \ref{lem-lk-mod}. 
\end{proof}

\section{The OU number and Reidemeister moves}
\label{section-r3}

In this section, we discuss the behavior of the OU number under Reidemeister moves. 
The OU number is unchanged by an RI move since the non-self OU sequence is not affected by an RI move, which involves only a self-crossing. 
For R\II moves, the following proposition holds. 

\medskip 
\begin{proposition}
For a link diagram $D=K_1 \cup K_2 \cup \dots \cup K_r$, 
the OU number $\Phi (K_i)$ of each component $K_i$ is invariant under R\II moves. 
\label{prop-r2}
\end{proposition}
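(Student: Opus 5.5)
An R\II move involves two strands, and I will split into two cases according to whether they belong to the same component or to different components, and compare the non-self OU sequences before and after the move.

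\textbf{Self R\II move.} If both strands belong to the same component $K_j$, the two crossings created or removed are self-crossings of $K_j$. Since $f(K_i)$ records only non-self crossings, every $f(K_i)$ is literally unchanged, so $\Phi(K_i)$ is unchanged.

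\textbf{Non-self R\II move.} Suppose the two strands belong to distinct components $K_j$ and $K_k$, with the strand of $K_j$ lying over the strand of $K_k$. Two non-self crossings $c_1, c_2$ are created or removed. For $i \neq j,k$, $f(K_i)$ is unchanged. Traversing $K_j$ through the bigon, we meet $c_1$ and $c_2$ consecutively, and no other non-self crossing lies between them, because the bigon is an empty local disk. Both are over-crossings for $K_j$, so $f(K_j)$ changes by inserting or deleting a consecutive pair ``$OO$''. Likewise $f(K_k)$ changes by inserting or deleting a consecutive pair ``$UU$''. The modification is exactly a reduction or its inverse, so Corollary \ref{cor-con-sigma} gives $\Phi(K_j)$ and $\Phi(K_k)$ unchanged. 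Orientation plays no role, by Proposition \ref{prop-sigma-ori}.

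\textbf{Edge cases.} If the two letters are the only letters of $f(K_j)$, the reduction produces the empty sequence. Its OU number is $0$, which is consistent with $\Phi(OO)=\frac{1}{2}(-1+1)=0$, so the convention $\Phi(\emptyset)=0$ agrees with the computation. Also, a cyclic sequence of length $2$ such as $OO$ has the two letters consecutive in either reading, so the reduction is well defined there.

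The only genuine point to check, and thus the mild obstacle, is the claim that in the cyclic sequence the two new letters are adjacent. This follows from the locality of the R\II move: the arc of $K_j$ between $c_1$ and $c_2$ lies inside the move disk, where it meets only the other strand, which belongs to $K_k$.
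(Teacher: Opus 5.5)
Your proof is correct and matches the paper's argument. Self R\II moves leave every non-self OU sequence unchanged, while a non-self R\II move inserts or deletes an adjacent pair $OO$ on one component and $UU$ on the other, which is a reduction (or its inverse), so Corollary~\ref{cor-con-sigma} applies; your extra checks on adjacency and on the empty-sequence convention are details the paper leaves implicit.
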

\medskip 

\begin{proof}
For an R\II move in the same component, the non-self OU sequences are unchanged, and therefore $\Phi$ is also unchanged for each $K_i$. 
Let $D'=K'_1 \cup K'_2 \cup \dots \cup K'_r$ be a diagram that is obtained from $D=K_1 \cup K_2 \cup \dots \cup K_r$ by an R\II move between $K_i$ and $K_j$ ($i \neq j$) reducing two crossings. 
Then $f(K'_k)$ is obtained from $f(K_k)$ by a reduction ($k=i, j$). 
There are no changes of non-self OU sequences for other components. 
Hence, the OU numbers are unchanged for all components. 
\end{proof}
\medskip 

\noindent Now we discuss the behavior of the OU number $\Phi$ under R\III moves. 
For the three segments where an R\III move is applied, we call them the {\it upper}, {\it middle}, and {\it lower segments} according to the layeredness as shown in Figure~\ref{fig-R3}. 
\begin{figure}[ht]
\centering
\includegraphics[width=9cm]{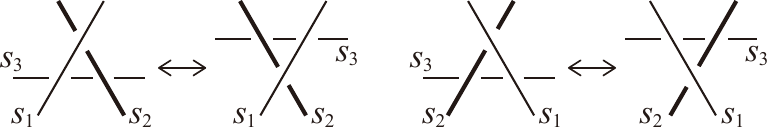}
\caption{The upper, middle, and lower segments $s_1, s_2$, and $s_3$. Middle segments are thickened. }
\label{fig-R3}
\end{figure}

\medskip 
\begin{definition}
For an R\III move, when the upper and middle segments belong to different components and the middle and lower segments also belong to different components, we call the R\III move the {\it Reidemeister move of type \IIIa}, or simply \emph{R\IIIa}. 
(The upper and lower segments may belong to the same or different components.) 
\label{def-r3a}
\end{definition}
\medskip

\noindent 
In other words, an R\III move is an R\IIIa move when both crossings on the middle segment are non-self crossings. 
See Figure \ref{fig-r3-ex} for an example.
The following proposition holds. 

\medskip 
\begin{proposition}
For a link diagram $D=K_1 \cup K_2 \cup \dots \cup K_r$, 
the OU number $\Phi (K_i)$ of each component $K_i$ is invariant under R\III moves except for R\IIIa. 
\label{prop-r3}
\end{proposition}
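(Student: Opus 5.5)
Since the move is not of type R\IIIa, at least one of the two crossings on the middle segment $s_2$ is a self crossing, so the middle segment shares its component with the upper segment $s_1$ or with the lower segment $s_3$ (or both). I will split into cases by which segments lie on which components and, in each case, compare $f(K_k)$ before and after the move for every $k$. Only the components containing $s_1,s_2,s_3$ can be affected. An R\III move preserves, for each pair of segments, the crossing between them and its over/under data; the only change is the order in which each segment meets its two crossings.

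First case: all three segments lie on one component. Then all three crossings are self crossings and every $f(K_k)$ is unchanged. Second case: $s_1$ and $s_2$ lie on the same component $K_a$, and $s_3$ lies on a different component $K_b$. Along $s_3$ the two crossings met are both non-self, and $s_3$ is under at both, so $s_3$ contributes $UU$ in either order; hence $f(K_b)$ is unchanged. Along $s_1$ the crossing with $s_2$ is a self crossing and is ignored, so $s_1$ contributes a single $O$, and its position relative to other letters of $f(K_a)$ on that arc does not change. Similarly $s_2$ contributes a single $U$ (its crossing with $s_3$). The third case, with $s_2$ and $s_3$ on the same component and $s_1$ on another, is symmetric: $s_1$ contributes $OO$ and each of $s_2,s_3$ contributes a single $O$.

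The main obstacle is the case where $s_1$ and $s_2$ lie on $K_a$ (or $s_2,s_3$ do) and contribute letters to the same cyclic word, since we must check that the cyclic word itself, not merely the multiset of letters, is preserved. Each segment is a short arc of $K_a$ carrying at most one recorded non-self crossing, and the move is supported in a disk. Hence the order in which $K_a$ visits these arcs is unchanged, and so is the relative order of the recorded letters. The only reorderings happen inside a single segment, and each such reordering swaps a self crossing (not recorded) with a non-self crossing, or swaps two letters that are equal. Therefore $f(K_k)$ is literally unchanged as a cyclic sequence for every $k$, and $\Phi(K_k)$ is unchanged. Finally I will note that both orientations of each segment and both mirror pictures of Figure~\ref{fig-R3} are covered, because the argument uses only the layer structure. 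The unoriented case follows from Proposition~\ref{prop-sigma-ori}.
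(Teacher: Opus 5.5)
Your proof is correct and follows the same idea as the paper's. The upper and lower segments carry only equal letters ($OO$ or $UU$), so reordering them changes nothing. The middle segment can swap $OU\leftrightarrow UO$ only when both of its crossings are non-self, which is exactly R\IIIa. Your case split by component, together with the remark that the move is local so the order in which a component visits the segments is fixed, makes explicit what the paper leaves implicit.

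One slip to fix: the middle segment is over the lower segment and under the upper one. So in your second case $s_2$ contributes $O$, not $U$, and in your third case $s_2$ and $s_3$ each contribute $U$, not $O$. This does not affect the argument, which only uses that each such segment carries a single recorded letter.
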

\medskip 

\begin{proof}
When an R\III move is applied, there are no changes of the non-self OU sequences at the upper and lower segments. 
For the middle segment, there is a change of ``$OU$'' and ``$UO$'' only when both crossings on the middle segment are non-self crossings, namely, when the R\III move is an R\IIIa move. 
\end{proof}

We are now ready to prove Theorem \ref{thm-r3a}.

\begin{proof}[Proof of Theorem~\ref{thm-r3a}]
It immediately follows from Propositions \ref{prop-r2} and \ref{prop-r3}. 
\end{proof}

For unordered links, we have the following corollary. 

\begin{corollary}
Let $D=K_1 \cup K_2 \cup \dots \cup K_r$ and $D'=K'_1 \cup K'_2 \cup \dots \cup K'_r$ be diagrams representing the same unoriented unordered link. 
If 
\begin{align*}
\{ \Phi (K_1), \Phi (K_2), \dots , \Phi (K_r) \} \neq \{ \Phi (K'_1), \Phi (K'_2), \dots , \Phi (K'_r) \}
\end{align*}
as multisets, then any sequence of Reidemeister moves between $D$ and $D'$ must contain an R\IIIa move. 
\end{corollary}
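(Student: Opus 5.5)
The plan is to argue by contrapositive, using Theorem~\ref{thm-r3a} directly. Suppose some sequence of Reidemeister moves $D=D_0 \to D_1 \to \dots \to D_n = D'$ contains no R\IIIa move. The first step is to track components along the sequence. Each Reidemeister move is a local change inside a disk, so every component of $D_{k}$ corresponds canonically to a component of $D_{k+1}$: the arcs outside the disk are unchanged, and inside the disk each strand is replaced by a strand joining the same endpoints. Composing these correspondences gives a bijection $\sigma$ of $\{1,\dots,r\}$ such that $K_i$ of $D$ is carried to $K'_{\sigma(i)}$ of $D'$. This $\sigma$ need not be the identity, because the link is unordered and the labelling of $D'$ is arbitrary.

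Next we apply Theorem~\ref{thm-r3a} to each move in turn; by that theorem (Propositions~\ref{prop-r2} and~\ref{prop-r3}, together with the RI remark), the OU number of each tracked component is unchanged by every RI, R\II, and non-R\IIIa R\III move. Two small points must be checked:
\begin{itemize}
\item The OU number is defined for unoriented diagrams via Proposition~\ref{prop-sigma-ori}, so we may fix orientations compatible along the sequence, or simply ignore them.
\item The non-self OU sequence depends only on which crossings are self and which are non-self, and the local moves respect the tracked components.
\end{itemize}
By induction on $k$ we then get $\Phi(K'_{\sigma(i)}) = \Phi(K_i)$ for every $i$.

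Consequently the multisets $\{\Phi(K_1),\dots,\Phi(K_r)\}$ and $\{\Phi(K'_1),\dots,\Phi(K'_r)\}$ coincide, because $\sigma$ is a bijection. This contradicts the hypothesis, so the sequence must contain an R\IIIa move.

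The only delicate step is the well-definedness of the component correspondence across moves, in particular that an R\II or R\III move involving strands of possibly the same component does not merge or split components. This is immediate from locality, since each move preserves the pairing of boundary endpoints of the disk by strands. I expect no real obstacle beyond stating this cleanly.
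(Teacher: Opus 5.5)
Your argument is correct and is essentially the paper's route. The paper gives no separate proof: it states the corollary directly after the proof of Theorem~\ref{thm-r3a} as an immediate consequence of it. Your contrapositive, with the component-tracking bijection $\sigma$, just writes out explicitly the step the paper leaves implicit.
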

\medskip

\begin{example}
To transform the diagram illustrated in Figure \ref{fig-3} into a trivial diagram, at least one R\IIIa move is required since $\{ \Phi (K_1), \Phi (K_2), \Phi (K_3) \} = \{ 0, 2, 0 \} \neq \{ 0, 0, 0 \}$.\footnote{In \cite{M}, the necessity of an R\III move for this diagram is explained using the checkerboard coloring to a knot component. } 
Indeed, the diagram is transformed into a trivial diagram by exactly one R\IIIa move and R\hspace{-0.5pt}I\hspace{-0.5pt}I moves. 
\end{example}
\medskip 

\begin{proposition}
Let $D'=K'_1 \cup K'_2 \cup  \dots \cup K'_r$ be an unoriented ordered link diagram obtained from $D=K_1 \cup K_2 \cup \dots \cup K_r$ by an R\IIIa move where the middle segment belongs to $K_j$ and $K'_j$. 
Then $| \Phi (K'_j) - \Phi (K_j) | =0$ or $2$ and $\Phi (K'_i) = \Phi (K_i)$ for $i \neq j$. 
In particular, as long as $\Phi (K_j) \neq 1$, we have $| \Phi (K'_j) - \Phi (K_j) | =2$. 
\label{prop-two}
\end{proposition}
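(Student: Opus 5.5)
The plan is to reduce the statement to Proposition \ref{prop-ou-uo} by tracking how an R\IIIa move changes the non-self OU sequences. Since $\Phi$ does not depend on orientations (Proposition \ref{prop-sigma-ori}), I will fix an arbitrary orientation on every component. Then $f(K_i)$ is defined and $\Phi(K_i)=\Phi(f(K_i))$ for each $i$.

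First I deal with the components other than $K_j$. The move takes place in a disk containing three segments $s_1, s_2, s_3$ (upper, middle, lower). A segment belonging to a component $K_i$ with $i\neq j$ is necessarily the upper or the lower one.
\begin{itemize}
\item The upper segment passes over both others before and after the move, so it reads the letters $O$, $O$ at its two crossings in both diagrams.
\item The lower segment likewise reads $U$, $U$ in both diagrams.
\item Which of these crossings are non-self depends only on component membership, and that does not change. So the recorded subword is the same before and after.
\item Outside the disk nothing changes, and the cyclic order along each segment is preserved, because the two crossings on the upper (or lower) segment carry identical letters.
\end{itemize}
Hence $f(K'_i)=f(K_i)$ for $i\neq j$, which gives $\Phi(K'_i)=\Phi(K_i)$. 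The same argument applies to any part of $K_j$ other than the middle segment itself, for instance if $s_1$ or $s_3$ also belonged to $K_j$. That case is excluded anyway, since in an R\IIIa move the middle segment's crossings are both non-self.

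Next I handle $K_j$. Along $s_2$ the walker meets one crossing with $s_1$, where it reads $U$, and one with $s_3$, where it reads $O$. Both are non-self by Definition \ref{def-r3a}, so both letters appear consecutively in $f(K_j)$; no other crossings lie on $s_2$ inside the disk. The R\III move reverses the order in which $s_2$ meets $s_1$ and $s_3$. Therefore, with the chosen orientation, $f(K'_j)$ is obtained from $f(K_j)$ by replacing one consecutive pair $OU$ by $UO$ or vice versa. By symmetry of the move, i.e. running it backwards, I may assume $f(K_j)$ contains the pair $OU$ that becomes $UO$.

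Applying Proposition \ref{prop-ou-uo} with $w=f(K_j)$ and $w'=f(K'_j)$ then gives $|\Phi(K'_j)-\Phi(K_j)|\in\{0,2\}$, and the difference equals $2$ when $\Phi(K_j)\neq1$. Some care is needed in the reverse direction, where $\Phi(K'_j)$ rather than $\Phi(K_j)$ plays the role of $\Phi(w)$ in the last clause. I would avoid this by noting that the proof of Proposition \ref{prop-ou-uo} gives the same conclusion for a $UO\to OU$ replacement: the value of $\Phi(S)$ again shifts by $\pm2$ and the same case analysis applies.

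The only delicate step is the geometric claim that the two non-self letters on the middle segment are adjacent in $f(K_j)$ and are simply swapped. This follows from the local picture in Figure~\ref{fig-R3}.
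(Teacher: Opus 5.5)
Your proposal is correct and follows the paper's own argument. The paper likewise uses the proof of Proposition \ref{prop-r3} (the upper and lower segments' non-self letters are unchanged, and the middle segment's consecutive non-self pair is swapped between $OU$ and $UO$) and then applies Proposition \ref{prop-ou-uo}. Your extra care about the $UO\to OU$ direction is sound, and it could equally be handled by choosing the orientation of $K_j$ so that $s_2$ reads $OU$ before the move, which is permitted by Proposition \ref{prop-sigma-ori}.
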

\medskip 

\begin{proof}
It follows from the proof of Proposition \ref{prop-r3} and  Proposition \ref{prop-ou-uo}. 
\end{proof}
\medskip 

\noindent The following corollary follows from Proposition~\ref{prop-two}. 

\medskip
\begin{corollary}
Let $D=K_1 \cup K_2 \cup \dots \cup K_r$, $D'=K'_1 \cup K'_2 \cup \dots \cup K'_r$ be diagrams representing the same unoriented ordered link. 
If $| \Phi (K'_j) - \Phi (K_j)|=N$, then any sequence of Reidemeister moves between $D$ and $D'$ must contain at least $\frac{N}{2}$ R\IIIa moves whose middle segment belongs to the $j$th component. 
\label{cor-r3-ki}
\end{corollary}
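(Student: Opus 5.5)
We argue by counting, tracking only the value of $\Phi$ on the $j$th component along the given sequence. Fix a sequence of Reidemeister moves
$$D=D_0 \to D_1 \to \cdots \to D_n = D'$$
between the two diagrams, and write $K^{(t)}_j$ for the $j$th component of $D_t$. Since the link is ordered, the $j$th component is well defined at every stage, so $\Phi(K^{(t)}_j)$ is a well-defined non-negative integer for each $t$. Orientation plays no role, by Proposition~\ref{prop-sigma-ori}.

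We first bound the change of $\Phi(K^{(t)}_j)$ in a single step from $D_{t-1}$ to $D_t$, splitting by the type of move.
\begin{itemize}
\item[$\bullet$] If the move is an RI move, an R\II move, or an R\III move that is not of type R\IIIa, then $\Phi(K^{(t)}_j)=\Phi(K^{(t-1)}_j)$ by Theorem~\ref{thm-r3a}.
\item[$\bullet$] If the move is an R\IIIa move whose middle segment lies on a component $K_i$ with $i\neq j$, then Proposition~\ref{prop-two} (applied with that $i$ in place of $j$) again gives $\Phi(K^{(t)}_j)=\Phi(K^{(t-1)}_j)$.
\item[$\bullet$] If the move is an R\IIIa move whose middle segment lies on the $j$th component, then Proposition~\ref{prop-two} gives $|\Phi(K^{(t)}_j)-\Phi(K^{(t-1)}_j)|\in\{0,2\}$.
\end{itemize}
Thus every step changes $\Phi$ of the $j$th component by at most $2$, and only steps of the last kind can change it at all.

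Now let $m_j$ be the number of R\IIIa moves in the sequence whose middle segment belongs to the $j$th component. Telescoping and the triangle inequality give
$$N=|\Phi(K'_j)-\Phi(K_j)| \le \sum_{t=1}^{n}\left|\Phi(K^{(t)}_j)-\Phi(K^{(t-1)}_j)\right| \le 2m_j,$$
so $m_j \ge N/2$, which is the claim.

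I expect no real obstacle. The only point needing care is that Proposition~\ref{prop-two} concerns a single move and identifies the one component whose $\Phi$ can change, namely the one carrying the middle segment. The case analysis above depends on that identification, and the orderedness of the link is what keeps the label $j$ meaningful throughout the sequence.
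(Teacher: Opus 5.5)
Your proposal is correct and follows the paper's proof. The paper's proof makes the same observation: moves other than R\IIIa moves with middle segment on the $j$th component leave $\Phi$ of that component unchanged, and each such R\IIIa move changes it by at most $2$ (Proposition~\ref{prop-two}). Your telescoping and triangle-inequality step is a more explicit write-up of the same count.
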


\begin{proof}
Since such an R\IIIa move changes the OU number by at most 2 for the $j$th component, at least $\frac{N}{2}$ moves are needed. 
\end{proof}
\medskip 

We prove Theorems \ref{thm-ordered} and \ref{thm-unordered}.

\begin{proof}[Proof of Theorems~\ref{thm-ordered} and~\ref{thm-unordered}]
The inequalities follow from Corollary \ref{cor-r3-ki}. 
\end{proof}

\section{Examples}
\label{section-example}

In this section, we observe examples and prove Theorem \ref{thm-infty}.

\subsection{Jablonowski's diagram}

\noindent Let $D_{n,k}$ be the diagram of a trivial 2-component link shown in Figure \ref{fig-d-nk}. 

\begin{figure}[ht]
\centering
\includegraphics[width=10cm]{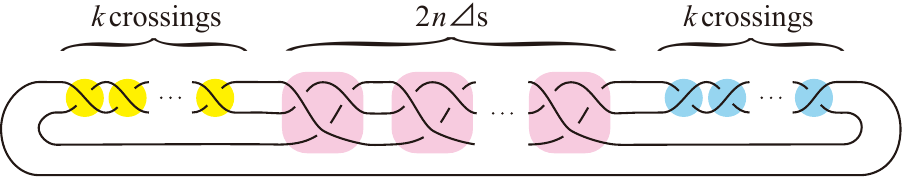}
\caption{Jablonowski's diagram $D_{n,k}$. }
\label{fig-d-nk}
\end{figure}

\noindent 
Jablonowski \cite{J} proved that transforming $D_{n,k}$ into a trivial diagram requires at least four R\III moves for any $n \geq 2$ and odd $k \geq 3$. 
We have the following.

\medskip 
\begin{proposition}
For any $n \geq 2$ and odd $k \geq 3$, transforming the diagram $D_{n,k}$ into a trivial diagram requires at least $n$ R\IIIa moves.\footnote{At the moment, any sequence of Reidemeister moves from $D_{n,k}$ to the trivial diagram which has exactly $n$ R\IIIa moves has not been found. }
\label{prop-dnk}
\end{proposition}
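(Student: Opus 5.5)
We apply Theorem~\ref{thm-ordered}, or equivalently Theorem~\ref{thm-unordered}, with $D' $ a trivial diagram. For a trivial diagram both OU numbers vanish. It therefore suffices to show that the two components $K_1, K_2$ of $D_{n,k}$ satisfy
\begin{align*}
\Phi(K_1)+\Phi(K_2)\geq 2n,
\end{align*}
since then at least $\frac{1}{2}\cdot 2n = n$ R\IIIa moves are needed.

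The main step is to read off the non-self OU sequences $f(K_1)$ and $f(K_2)$ from Figure~\ref{fig-d-nk}. Self-crossings are discarded, and this is where the parameter $k$, the twist region of a single component, should drop out entirely. In Jablonowski's diagram the $n$-dependent part consists of $n$ repeated clasps or threadings of one component through the other. Traversing a component, I expect these to contribute a pattern like $(OU)^n$ or $(OUOU)^{n/2}$, up to blocks $OO$ or $UU$. My plan is as follows.

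\begin{enumerate}
\item Write $f(K_1)$ and $f(K_2)$ explicitly as cyclic words in $O,U$ depending on $n$.
\item Apply reductions (Corollary~\ref{cor-con-sigma}) to cancel adjacent $OO$ and $UU$.
\item Arrive at alternating words. By Proposition~\ref{prop-OU-length} and the remark after the reduction examples, the OU number of each component is half the length of its alternating reduced word.
\end{enumerate}

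The target is something like $\Phi(K_1)=2n$ and $\Phi(K_2)=0$, or $\Phi(K_1)=\Phi(K_2)=n$. Either way the sum is at least $2n$, and the bound of Theorem~\ref{thm-unordered} gives $n$.

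The anticipated obstacle is the bookkeeping of the cyclic word. One must check that the clasps really alternate after reduction, and do not collapse as in $OOUU$. The danger is crossings entering in the order $OUUO\ldots$, which reduces to nothing. To control this, I would compute $\Phi$ directly from the signed sum on a linear representative, using Definition-level signs. Each clasp contributes a pair of crossings whose signs I verify have the same sign across all $n$ repetitions, so the contributions add rather than cancel.

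As a consistency check I would use Corollary~\ref{cor-lk}: the parity of $\Phi(K_i)$ must equal that of $lk(K_1,K_2)=0$ (the link is trivial), so each $\Phi(K_i)$ should come out even. If the direct count only gives $\Phi(K_1)+\Phi(K_2)=2n-1$ or similar, this parity constraint would flag an error.
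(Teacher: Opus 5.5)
\textbf{Verdict: the strategy is the paper's, but the proposal leaves out the one substantive step and mispredicts the diagram's structure.}

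Your framework matches the paper: compute $\Phi$ for both components of $D_{n,k}$, then apply Theorem~\ref{thm-unordered} against a trivial diagram, where all OU numbers vanish. But the proposal stops where the proof begins. The whole content of the argument is the explicit non-self OU sequences of $D_{n,k}$, and you only state an expectation for them.

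That expectation is also wrong in a way that matters. The crossings counted by $k$ are not self-crossings. They are crossings between $K_1$ and $K_2$, so $k$ appears in both non-self sequences, each of length $4n+2k$. With the paper's orientation and base points,
\[
f(K_1)=A\,B\,A, \qquad f(K_2)=C\,E\,C\,E,
\]
where:
\begin{itemize}
\item $A=OUO\cdots O$ and $E=UOU\cdots U$ have odd length $k$;
\item $B=UOUO\cdots UO$ has length $4n$;
\item $C=OU\cdots OU$ has length $2n$.
\end{itemize}
So $k$ does not drop out by itself; it has to be cancelled. The odd blocks $A$ and $E$ are exactly where the collapse you were worried about actually happens.

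\textbf{How the paper finishes.} It uses reductions. In $f(K_1)$, the second copy of $A$ is the mirror image of the tail of $B$, so they cancel letter by letter.
\begin{itemize}
\item If $4n>k$, this removes $2k$ letters and leaves an alternating cyclic word of length $4n$.
\item If $4n<k$, it removes all of $B$ ($8n$ letters), then $2k-8n$ more letters between the two copies of $A$. This again leaves an alternating word of length $4n$.
\end{itemize}
Hence $\Phi(K_1)=2n$. Meanwhile $f(K_2)$ reduces to $\emptyset$, so $\Phi(K_2)=0$, and the whole bound comes from $K_1$. Your alternative target $\Phi(K_1)=\Phi(K_2)=n$ does not occur. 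For odd $n$ your own parity check via Corollary~\ref{cor-lk} already rules it out.

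\textbf{Your signed-sum idea.} It is a cleaner finish than the paper's case split. On the linear representatives above:
\begin{itemize}
\item the blocks of $f(K_1)$ contribute $-k$, $-4n$, $+k$ to $\sum\varepsilon$, so $\sum\varepsilon=-4n$ and $\Phi(K_1)=|{-2n}|=2n$;
\item the blocks of $f(K_2)$ contribute $-2n$, $+k$, $+2n$, $-k$, so $\Phi(K_2)=0$.
\end{itemize}
This holds for all $n$ and $k$ at once. However, until the two sequences are actually read off the figure, the key inequality $\Phi(K_1)+\Phi(K_2)\ge 2n$ is unproved. As written, the proposal is a plan rather than a proof.
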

\medskip 

\begin{proof}
Give an orientation and base points to $D_{n,k}$ as shown in Figure \ref{fig-d-pf}. 
By traversing $K_1$ and $K_2$ from the base points, we obtain their non-self OU sequences with the following expressions. 
\begin{align*}
f(K_1)= & \overbrace{OUO \cdots O}^{k} \ \overbrace{UOUO\cdots UO}^{4n} \ \overbrace{OUO \cdots O}^{k}, \\
f(K_2)= & \overbrace{OU \cdots OU}^{2n} \ \overbrace{UOU\cdots U}^{k} \ \overbrace{OU \cdots OU}^{2n} \ \overbrace{UOU \cdots U}^{k}.
\end{align*}
When $4n>k$, the non-self OU sequence $f(K_1)$ can be reduced by removing the $2k$ letters from the right-hand side in the above expression, and an alternating sequence of length $4n$ is obtained. 
Hence, we have $\Phi (K_1)=2n$. 
When $4n \leq k$, $f(K_1)$ can be reduced by $8n$ letters and then $2k-8n$ letters, and an alternating sequence of length $4n$ is obtained. 
We have $\Phi (K_1) = 2n$. 
The sequence $f(K_2)$ can be reduced into $\emptyset$ and we obtain $\Phi (K_2)=0$. 
Therefore, at least $\frac{1}{2} | (2n+0)-(0+0) | = n$ R\IIIa moves are needed to transform $D_{n, k}$ into the trivial diagram by Theorem~\ref{thm-unordered}. 
\end{proof}

\begin{figure}[ht]
\centering
\includegraphics[width=10cm]{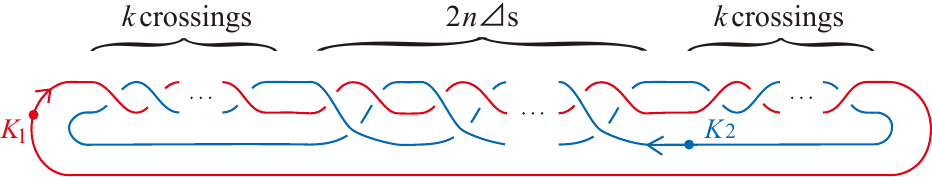}
\caption{Jablonowski's diagram $D_{n,k}$ with base points. }
\label{fig-d-pf}
\end{figure}

We prove Theorem \ref{thm-infty}.

\begin{proof}[Proof of Theorem~\ref{thm-infty}]
For $n \geq 2$, it follows from Proposition \ref{prop-dnk}. 
For $n=1$, it follows by taking $D_{2,k}$. 
\end{proof}
\medskip 

\noindent We have the following corollary. 

\medskip 
\begin{corollary}
For any link $L$ with at least two components and any natural number $n$, there is a pair of diagrams $D^1$ and $D^2$ of $L$ such that any sequence of Reidemeister moves transforming $D^1$ into $D^2$ must contain at least $n$ R\IIIa moves. 
\label{cor-r3}
\end{corollary}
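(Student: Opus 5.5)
The plan is to take a fixed diagram $D^1$ of $L$ and build $D^2$ from it by a local modification that changes the OU number of a single component by a large amount without changing the link type. Corollary~\ref{cor-r3-ki}, or Theorem~\ref{thm-ordered} applied with the natural identification of components, then gives the bound. Since $L$ has at least two components, choose components $k_1,k_2$ and a diagram $D^1=K_1\cup\dots\cup K_r$ of $L$ in which some arc $\alpha$ of $K_1$ and some arc $\beta$ of $K_2$ lie on the boundary of a common region of $D^1$. This is always possible: pass through a chain of regions and use R\II moves to make two arcs of different components adjacent. Alternatively, it is enough to note that the complementary regions are connected and some region meets two components, because the diagram of a link with at least two components is nonempty in each component.

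The modification of $D^1$ is to insert, inside a small disk $B$ meeting $D^1$ only in $\alpha\cup\beta$ (two parallel arcs), a tangle that is a copy of the local pattern of Jablonowski's diagram $D_{n,k}$. Equivalently, isotope $\beta$ inside $B$ (fixing $\partial B$) by a finger move of $K_2$ that winds around $\alpha$ as in $D_{n,k}$. Since this is an isotopy inside a ball, the result $D^2$ still represents $L$.

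The main step is to choose the tangle so that the contribution to $f(K_1)$ from $B$ is an alternating word $(OU)^{m}$ with $m$ large. The contributions to the other sequences should reduce to the empty word, or at least leave their OU numbers bounded. The simplest choice I would try first is to drop the Jablonowski pattern and use the clasp of Figure~\ref{fig-r3-ex} repeated. Push a finger of $\beta$ over $\alpha$, then let it twist with a self-crossing of $K_2$ and come back under $\alpha$. Each such ``threading'' contributes $OU$ to $f(K_1)$ and an adjacent cancelling pair to $f(K_2)$. Repeating $N$ times along $\alpha$ inserts $(OU)^N$ into $f(K_1)$ at one point. The letters inserted into $f(K_2)$ occur in consecutive $OO$ or $UU$ pairs, so they reduce away by Corollary~\ref{cor-con-sigma}, and $\Phi(K_2)$ is unchanged.

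For $f(K_1)$, let $S$ be a linear representative of $f(K_1)$ in $D^1$ cut at the insertion point, of length $2\ell$. The new representative is $S(OU)^N$ or $(OU)^N S$. The signed count gives $\Phi=\Phi(S)\pm N$, because $(OU)^N$ placed at even-length offset contributes $-N$. Hence $\Phi(K_1^2)\ge N-\Phi(K_1^1)$, and the difference is at least $N-2\Phi(K_1^1)\ge N-2\ell$. Taking $N=2n+2\ell$ and applying Corollary~\ref{cor-r3-ki} with $j=1$ gives at least $n$ R\IIIa moves.

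The step I expect to be the main obstacle is the local picture. I must check that each threading unit really is isotopic to the trivial finger move, so that the link type is preserved. I must also check that the inserted $f(K_2)$ letters genuinely reduce. In the example of Figure~\ref{fig-r3-ex}, $f(K_2)=OOUU$ reduces to empty while $f(K_1)=OUOU$, which is exactly the required behaviour. So the unit is modelled on that figure, with the two non-self crossings on $K_1$ arranged consecutively along $\alpha$. If this bookkeeping fails, the fallback is to connect-sum-like insert the $K_1$-threading part of $D_{n,k}$, whose sequences were computed in Proposition~\ref{prop-dnk}.
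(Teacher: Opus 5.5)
There is a genuine gap, at the step you flagged: the threading unit you describe is not isotopic to the trivial finger move, and no choice of self-crossings of $K_2$ can fix it. Keep $\alpha$ as a simple arc splitting $B$. Up to a global convention, the sign of a crossing between $\alpha$ and $\beta$ is $\epsilon\delta$, where $\epsilon=\pm1$ according as $\alpha$ is over or under, and $\delta=\pm1$ records from which side of $\alpha$ the strand $\beta$ arrives. A finger of $\beta$ that crosses $\alpha$ exactly twice has $\delta$-values $+1$ and $-1$. Your unit (over $\alpha$ on the way out, under on the way back) has $\epsilon$-values $-1$ and $+1$. So both crossings have the same sign, and the unit is a clasp that changes $lk(k_1,k_2)$ by $\pm1$. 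Self-crossings of $K_2$ do not enter the linking number, so $N$ copies change it by $\pm N$, and $D^2$ is not a diagram of $L$. The claimed cancellation in $f(K_2)$ is also impossible. Each non-self crossing is $O$ on one strand and $U$ on the other, so a unit reading $OU$ on $\alpha$ reads one $O$ and one $U$ on $\beta$, never $OO$ or $UU$. (Figure~\ref{fig-r3-ex} has four non-self crossings per threading, $OUOU$ against $OOUU$, not two.)

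A valid unit needs crossings of both signs, and its self-crossing must be chosen correctly. One way is to build it inside $B$ by Reidemeister moves:
\begin{itemize}
\item two R\II moves give a hook followed by an oppositely signed hook, so $\alpha$ reads $OUUO$;
\item an RI move puts a kink on the second finger just above $\alpha$, with its crossing chosen so that $\alpha$ is the middle strand of the triangle;
\item one R\IIIa move slides $\alpha$ across the kink crossing.
\end{itemize}
The unit then reads $OUOU$ on $\alpha$ and $UOOU$ on $\beta$, which reduces to $\emptyset$. Then $N$ units give $(OU)^{2N}$ on $\alpha$, and your counting goes through. With the opposite kink crossing, the same picture closes up to the alternating five-crossing Whitehead diagram with identical non-self OU data. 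So the OU bookkeeping can never replace the isotopy check.

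The paper takes your fallback in its cleanest form. Let $2m$ be the length of $f(K^1_i)$. The paper places the whole unlink diagram $D_{m+n,3}$ in a region $R$ having a $K_i$--$K_j$ crossing on its boundary, and connects its two components to $K_i$ and $K_j$. By the computation in Proposition~\ref{prop-dnk}, $f(K^2_i)$ reduces to $f(K^1_i)w_a$ with $w_a$ alternating of length $4m+4n$, and the other OU numbers are unchanged. The same estimate as yours then gives $\Phi(K^2_i)\ge m+2n$. Since only one OU number changes, Theorem~\ref{thm-unordered} also applies. You need this when $L$ is unordered, because a Reidemeister sequence may permute the components.
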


\begin{proof}
Let $D^1=K^1_1 \cup K^1_2 \cup \dots \cup K^1_r$ be a diagram of an oriented $r$-component link $L$, in which two components $K^1_i$ and $K^1_j$ have at least one non-self crossing. 
Suppose that $K^1_i$ has $2m$ non-self crossings in $D^1$. 
Then the length of $f(K^1_i)$ is $2m$ and $\Phi (K^1_i) \leq m$ by Proposition \ref{prop-OU-length}. 
Take a region $R$ of $D^1$ such that $R$ has a crossing between $K^1_i$ and $K^1_j$ on the boundary. 
Let $D=D_{m+n, 3}$ be the Jablonowski's diagram. 
Put $D$ inside the region $R$. 
Take small arcs of $K^1_i$ and $K^1_j$ on $\partial R$ and connect $D$ to $D^1$ as shown in Figure \ref{fig-d1d2}. 
\begin{figure}[ht]
\centering
\includegraphics[width=10cm]{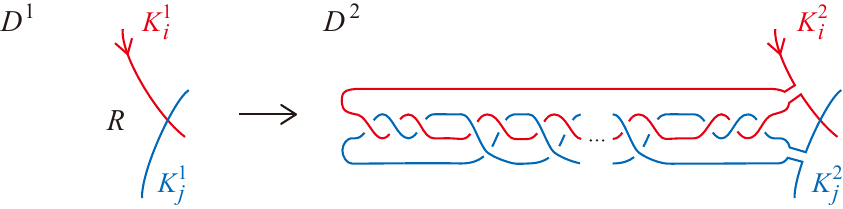}
\caption{Diagrams $D^1$ and $D^2$.} 
\label{fig-d1d2}
\end{figure}
Let $D^2=K^2_1 \cup K^2_2 \cup \dots \cup K^2_r$ be the obtained diagram. 
By the proof of Proposition~\ref{prop-dnk}, the non-self OU sequence $f(K^2_i)$ can be reduced to a sequence $f(K^1_i)w_a$, where $w_a$ is an alternating sequence of length $4m+4n$. 
By further reductions, we obtain an alternating sequence of length $2m+4n$ or more since the length of $f(K^1_i)$ is $2m$. 
Hence, we have $\Phi (K^2_i) \geq m+2n$ by Proposition \ref{prop-OU-length}. 
For other components including $K^1_j$, the OU numbers are unchanged, namely, $\Phi (K^2_l)= \Phi (K^1_l)$ for any $l \neq i$. 
Hence, we obtain a lower bound for the number of necessary R\IIIa moves that transform $D^2$ into $D^1$ as $\frac{1}{2} | \Phi (K^2_i)- \Phi (K^1_i) | \geq \frac{1}{2} | (m+2n)-m | =n$ by Theorems \ref{thm-ordered}, \ref{thm-unordered} for both ordered, unordered cases. 
\end{proof}

\subsection{Hayashi, Hayashi, and Nowik's diagram}

Let $D_n$ be a diagram of a trivial 2-component link that is the closure of an $(n+1)$-braid diagram 
$$\sigma_1 (\sigma_2 \sigma_1)(\sigma_3 \sigma_2) \dots (\sigma_n \sigma_{n-1}) \sigma_n^{-n}.$$ 
C. Hayashi, M. Hayashi, and T. Nowik \cite{HH} showed that for any natural number $n$, the diagram $D_n$ can be deformed into a trivial diagram by a sequence of $\frac{n^2+3n-2}{2}$ Reidemeister moves that consists of $(n-1)$ RI moves reducing a crossing, $n$ R\II moves reducing an incoherent bigon, and $\frac{(n-1)n}{2}$ R\III moves. 
They also gave a lower bound for the number of necessary Reidemeister moves deforming $D_n$ into a trivial diagram. 

\begin{figure}[ht]
\centering
\includegraphics[width=10cm]{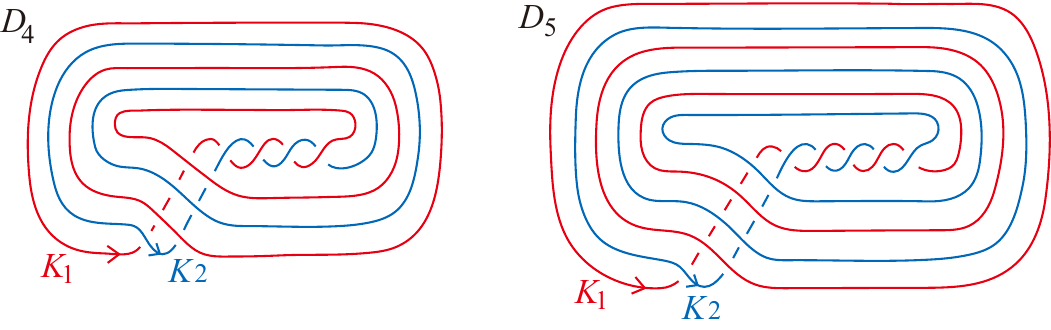}
\caption{Diagrams $D_4$ and $D_5$. }
\label{fig-dn}
\end{figure}

We have the following regarding R\IIIa moves. 

\medskip 
\begin{proposition} Let $n \geq 4$. 
Transforming $D_n$ into a trivial diagram requires at least 
\begin{itemize}
\item[$\bullet$] $\frac{n}{2}$ R\IIIa moves when $n \equiv 0, \ 2 \pmod{4}$, 
\item[$\bullet$] $\frac{n-1}{2}$ R\IIIa moves when $n \equiv 1 \pmod{4}$, and 
\item[$\bullet$] $\frac{n+1}{2}$ R\IIIa moves when $n \equiv 3 \pmod{4}$. 
\end{itemize}
\end{proposition}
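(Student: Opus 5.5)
The plan is to compute the non-self OU sequences $f(K_1)$ and $f(K_2)$ of $D_n$ directly from the braid word. We then simplify them by reductions (Corollary~\ref{cor-con-sigma}), read off $\Phi(K_1)$ and $\Phi(K_2)$ via Proposition~\ref{prop-OU-length} and the remark following Corollary~\ref{cor-con-sigma}, and finally apply Theorem~\ref{thm-unordered}. Since a trivial diagram has all OU numbers equal to $0$, the resulting lower bound is $\frac{1}{2}(\Phi(K_1)+\Phi(K_2))$. So the statement reduces to showing that $\Phi(K_1)+\Phi(K_2)$ equals $n$, $n-1$, or $n+1$ according to whether $n \equiv 0,2$, $1$, or $3 \pmod 4$. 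By Corollary~\ref{cor-lk}, each $\Phi(K_i)$ has the parity of $lk(K_1,K_2)$, so both summands have the same parity. This gives a useful consistency check on the computation.

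First I determine the two components. The positive part $\sigma_1(\sigma_2\sigma_1)(\sigma_3\sigma_2)\cdots(\sigma_n\sigma_{n-1})$ induces a permutation of the $n+1$ strands, and the factor $\sigma_n^{-n}$ contributes the transposition $(n\ n{+}1)$ when $n$ is odd and the identity when $n$ is even. I compute the cycle structure of the composite and fix base points and orientations. For each crossing I then record which strand positions meet there, and hence whether it is a self or a non-self crossing. The non-self crossings should split into two families. One family lies in the staircase $\sigma_1(\sigma_2\sigma_1)\cdots$, where each generator $\sigma_j$ in the word is crossed over/under in a fixed pattern. The other lies in the twist region $\sigma_n^{-n}$, where the two strands at positions $n,n+1$ interleave. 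If these strands belong to different components, that region contributes a block of length $n$ to each sequence, and that block is alternating.

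Next I write $f(K_1)$ and $f(K_2)$ as concatenations of a few explicit blocks, as was done for Jablonowski's diagram in the proof of Proposition~\ref{prop-dnk}. I then reduce them. A block of the same letter, $O^a$ or $U^a$, reduces to a single letter or to nothing according to the parity of $a$. Adjacent alternating blocks either merge, when the letters at the junction differ, or cancel against each other, when they coincide. After all reductions each sequence is alternating, and $\Phi$ is half its length.

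The main obstacle is the bookkeeping in the staircase. I have to track, as the closed braid is traversed, where each component passes through $\sigma_j$ versus $\sigma_{j-1}$ and whether the crossing is self or non-self. The fine dependence on $n \bmod 4$, rather than just $n \bmod 2$, presumably arises here. The parity of $n$ decides whether the twist block has even or odd length and whether its ends match the neighbouring letters. A further parity, from the position of the junction in the staircase, decides whether a leftover pair cancels or survives. My first step will be to work out $D_4$, $D_5$, $D_6$, $D_7$ by hand from Figure~\ref{fig-dn}, guess the general block form, and then prove it by induction on $n$. In the inductive step, passing from $n$ to $n+1$ appends the factor $(\sigma_{n+1}\sigma_n)$ and changes the twist $\sigma_n^{-n}$ into $\sigma_{n+1}^{-(n+1)}$. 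I track how the reduced sequences change under this step, again using Corollary~\ref{cor-con-sigma} so that only reduced forms need to be carried along.
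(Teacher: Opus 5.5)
\textbf{Verdict: the route is the paper's, but the one substantive step is missing.} Like the paper, you compute $f(K_1)$ and $f(K_2)$ for the closed braid, reduce them to alternating words, and apply Theorem~\ref{thm-unordered} against the trivial diagram, so the bound is $\frac12(\Phi(K_1)+\Phi(K_2))$. What you assert along the way is correct: the closure has two components, and the twist $\sigma_n^{-n}$ involves strands of different components and gives an alternating block of length $n$ in each sequence. However, you never determine the staircase contribution. So the sequences, the values of $\Phi(K_i)$, the numbers $n/2$, $(n-1)/2$, $(n+1)/2$ and their dependence on $n \bmod 4$ are only anticipated (``should'', ``presumably''), not derived. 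The paper's proof consists precisely of writing the two sequences down and reducing them, so as written your proposal is a plan rather than a proof.

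\textbf{The missing bookkeeping is short and needs no induction.}
\begin{itemize}
\item[$\bullet$] Label strands by their starting positions. After $\sigma_1(\sigma_2\sigma_1)\cdots(\sigma_k\sigma_{k-1})$, strand $1$ sits at position $k+1$, strand $2$ at position $k$, and strand $j$ ($3\le j\le k+1$) at position $j-2$.
\item[$\bullet$] Hence the closure permutation is $j\mapsto j-2$ for $j\ge 3$, with $\{1,2\}\mapsto\{n,n+1\}$. So the components are the odd-labelled and the even-labelled strands, and strands $1,2$ are the ones entering the twist.
\item[$\bullet$] In the factor $(\sigma_k\sigma_{k-1})$, the crossing $\sigma_k$ is strand $1$ against strand $k+1$, and $\sigma_{k-1}$ is strand $2$ against strand $k+1$. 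Exactly one of them is non-self.
\item[$\bullet$] Strand $1$ always passes from left to right, so all its non-self staircase crossings carry the same letter. The same holds for strand $2$ after its first crossing $\sigma_1$. Each strand $j\ge 3$ meets exactly one non-self crossing, and it carries the opposite letter.
\end{itemize}
Up to interchanging $O$ and $U$, which does not change $\Phi$, this yields the paper's sequences:
\begin{itemize}
\item[$\bullet$] for $n$ even, $f(K_1)=U^{n/2}(UO)^{n/2}O^{n/2}$ and $f(K_2)=U^{n/2}(OU)^{n/2}O^{n/2}$;
\item[$\bullet$] for $n$ odd, $f(K_1)=U^{(n+1)/2}\,w\,O^{(n-1)/2}$ and $f(K_2)=U^{(n-1)/2}\,\bar w\,O^{(n+1)/2}$, where $w=UOU\cdots U$ and $\bar w=OUO\cdots O$ are alternating of length $n$.
\end{itemize}

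\textbf{Where the mod $4$ dependence comes from.} It is the parity of the exponents $n/2$ and $(n\pm1)/2$. Each constant block reduces to nothing or to a single letter, and that letter either extends the alternating middle block or cancels against its end. This gives:
\begin{itemize}
\item[$\bullet$] for $n\equiv 0\pmod{4}$, $\Phi(K_1)=\Phi(K_2)=n/2$;
\item[$\bullet$] for $n\equiv 2\pmod{4}$, $\Phi(K_1)=n/2-1$ and $\Phi(K_2)=n/2+1$;
\item[$\bullet$] for $n\equiv 1\pmod{4}$, both equal $(n-1)/2$;
\item[$\bullet$] for $n\equiv 3\pmod{4}$, both equal $(n+1)/2$.
\end{itemize}
These values match your parity check via Corollary~\ref{cor-lk}.
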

\medskip 

\begin{proof}
Give the orientation and the order of components as shown in Figure~\ref{fig-dn}. 
\begin{itemize}
\item[(1)] When $n$ is even, the OU sequences are as follows. 
\begin{align*}
f(K_1)= & \overbrace{UU \cdots U}^{\frac{n}{2}} \ \overbrace{UOU\cdots UO}^{n} \ \overbrace{OO \cdots O}^{\frac{n}{2}}, \\
f(K_2)= & \overbrace{UU \cdots U}^{\frac{n}{2}} \ \overbrace{OUO \cdots OU}^{n} \ \overbrace{OO \cdots O}^{\frac{n}{2}}.
\end{align*}
\begin{itemize}
\item[$\bullet$] Suppose that $n \equiv 0 \pmod{4}$. 
For $f(K_1)$, the $\frac{n}{2}$ letters of $U$ on the left and the $\frac{n}{2}$ letters of $O$ on the right can be removed by reductions and an alternating sequence of length $n$ is obtained. 
Hence $\Phi (K_1)= \frac{n}{2}$. 
We obtain $\Phi (K_2)= \frac{n}{2}$ in the same way. 
Hence, $\frac{1}{2} ( \Phi (K_1) + \Phi (K_2) -0)= \frac{n}{2}$. 
\item[$\bullet$] Suppose that $n \equiv 2 \pmod{4}$. 
For $f(K_1)$, the $(\frac{n}{2}+1)$ letters of $U$ on the left and the $(\frac{n}{2}+1)$ letters of $O$ on the right can be removed by reductions and an alternating sequence of length $(n-2)$ is obtained. 
Hence $\Phi (K_1)= \frac{n}{2}-1$. 
For $f(K_2)$, the $(\frac{n}{2}-1)$ letters of $U$ on the left and the $(\frac{n}{2}-1)$ letters of $O$ on the right can be removed by reductions and an alternating sequence of length $(n+2)$ is obtained. 
Hence $\Phi (K_2)= \frac{n}{2}+1$. 
Therefore, we obtain $\frac{1}{2} ( \Phi (K_1) + \Phi (K_2))= \frac{n}{2}$. 
\end{itemize}
\item[(2)] When $n$ is odd, the OU sequences are as follows. 
\begin{align*}
f(K_1)= & \overbrace{UU \cdots U}^{\frac{n+1}{2} } \ \overbrace{UOU\cdots OU}^{n} \ \overbrace{OO \cdots O}^{\frac{n-1}{2}}, \\
f(K_2)= & \overbrace{UU \cdots U}^{ \frac{n-1}{2}} \ \overbrace{OUO \cdots UO}^{n} \ \overbrace{OO \cdots O}^{\frac{n+1}{2}}.
\end{align*}
\begin{itemize}
\item[$\bullet$] Suppose that $n \equiv 1 \pmod{4}$. 
For $f(K_1)$, the $(\frac{n+1}{2}+1)$ letters of $U$ on the left and the $\frac{n-1}{2}$ letters of $O$ on the right can be removed by reductions and an alternating sequence of length $(n-1)$ is obtained. 
Hence $\Phi (K_1)= \frac{n-1}{2}$. 
Similarly, we obtain $\Phi (K_2)= \frac{n-1}{2}$. 
Hence, $\frac{1}{2} ( \Phi (K_1) + \Phi (K_2))= \frac{n-1}{2}$. 
\item[$\bullet$] Suppose that $n \equiv 3 \pmod{4}$. 
For $f(K_1)$, the $\frac{n+1}{2}$ letters of $U$ on the left and the $(\frac{n-1}{2}-1)$ letters of $O$ on the right can be removed by reductions and an alternating sequence of length $(n+1)$ is obtained. 
Hence $\Phi (K_1)= \frac{n+1}{2}$. 
Similarly, we obtain $\Phi (K_2)= \frac{n+1}{2}$. 
Hence, $\frac{1}{2} ( \Phi (K_1) + \Phi (K_2))= \frac{n+1}{2}$. 
\end{itemize}
\end{itemize}
\end{proof}
\medskip

\subsection{Reidemeister moves in \texorpdfstring{$\mathbb{R}^2$}{R\textasciicircum 2}}

Let $D$, $D'$ be the diagrams shown in Figure \ref{fig-r2} representing the same 2-component link. 
In \cite{CESS}, Carter, Elhamdadi, Saito and Satoh proved that any sequence of Reidemeister moves transforming $D$ into $D'$ in $\mathbb{R}^2$ requires three R\III moves. 
\begin{figure}[ht]
\centering
\includegraphics[width=10cm]{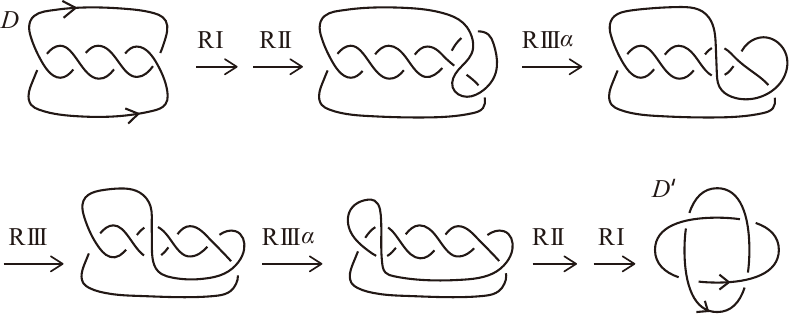}
\caption{A sequence of Reidemeister moves in $\mathbb{R}^2$ given in \cite{CESS}.}
\label{fig-r2}
\end{figure}
Note that $D$ and $D'$ are the same diagrams in $S^2$. 
By our method of Theorems \ref{thm-r3a}, \ref{thm-ordered} or \ref{thm-unordered}, we cannot distinguish the necessity of R\III moves on $\mathbb{R}^2$ for such $D$ and $D'$ since they have the same non-self OU sequence. 
Regarding the number of R\IIIa moves in $\mathbb{R}^2$, we have the following proposition. 

\medskip 
\begin{proposition}
Let $L=k_1 \cup k_2 \cup \dots \cup k_r$ be an unoriented ordered $r$-component link such that $\sum_{j \neq i} lk(k_i , k_j) \equiv 0 \pmod{2}$ with an orientation for each knot component $k_i$. 
Let $D=K_1 \cup K_2 \cup \dots \cup K_r$, $D'= K'_1 \cup K'_2 \cup \dots \cup K'_r$ be diagrams of $L$ on $S^2$ or $\mathbb{R}^2$ such that $\Phi (K_i)= \Phi (K'_i)$ for each $i$. 
Any sequence of Reidemeister moves deforming $D$ into $D'$ in $S^2$ or $\mathbb{R}^2$ contains a non-negative even number of R\IIIa moves. 
\end{proposition}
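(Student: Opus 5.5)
The plan is to track a parity invariant finer than the absolute value $\Phi(K_i)=|\Phi(S)|$: we keep the sign of $\Phi(S)$ along a fixed lift of $f(K_i)$. The point is that every R\IIIa move flips the sign data of exactly one component, and every other move preserves all of it. So the number of R\IIIa moves has a parity determined by $D$ and $D'$, and the hypotheses force that parity to be even.

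First, the hypothesis gives $\Phi(K_i)$ even. By Corollary~\ref{cor-lk}, $\Phi(K_i)\equiv\sum_{j\neq i}lk(K_i,K_j)\equiv 0 \pmod 2$ for every $i$, and this holds for every diagram in the sequence. Hence throughout the sequence no component ever has $\Phi=1$.

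The second step handles a single R\IIIa move whose middle segment lies on $K_j$. Since $\Phi(K_j)\neq 1$, Proposition~\ref{prop-two} gives $|\Phi(K'_j)-\Phi(K_j)|=2$, and all other OU numbers are unchanged. I would then look at $\sum_i \Phi(K_i)/2 \pmod 2$. This quantity changes by exactly $1$ at each R\IIIa move. By Theorem~\ref{thm-r3a} it is unchanged by every other Reidemeister move, and those arguments are local, so they apply equally in $\mathbb{R}^2$. Since $\Phi(K_i)=\Phi(K'_i)$ for every $i$, the quantity agrees at the two ends. Therefore the number of R\IIIa moves is even, and it is trivially non-negative.

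The main obstacle is making sure the parity of $\Phi$ really is even at every intermediate diagram, so that the exceptional case $\Phi=1$ of Proposition~\ref{prop-ou-uo} never occurs. This rests on the linking number being a link invariant, so that Corollary~\ref{cor-lk} applies to each intermediate diagram with the same orientations. Unoriented diagrams are handled by fixing the orientations once, using Proposition~\ref{prop-sigma-ori}. I also need to check that empty sequences ($\Phi=0$) cause no trouble. They do not: an R\IIIa move requires two non-self crossings on the middle segment, so $f(K_j)$ contains an adjacent pair $OU$ or $UO$, and Proposition~\ref{prop-ou-uo} applies.
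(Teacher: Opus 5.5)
Your proof is correct and follows essentially the paper's route. By Corollary~\ref{cor-lk}, which you rightly apply to every intermediate diagram and not only to $D$, all OU numbers stay even, so the exceptional case $\Phi=1$ of Proposition~\ref{prop-two} never occurs; each R\IIIa move then changes exactly one $\Phi(K_j)$ by $\pm 2$ while every other move preserves all of them, and your invariant $\sum_i \Phi(K_i)/2 \bmod 2$ just repackages the paper's observation that these $\pm 2$ changes must cancel. The opening idea of tracking the sign of $\Phi(S)$ along a fixed lift is never used and is best dropped, since that sign is not well defined (moving the base point flips it, by Proposition~\ref{prop-rot}).
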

\medskip 

\begin{proof}
As we have seen, the value of the OU number is preserved by Reidemeister moves except for R\IIIa moves by Theorem \ref{thm-r3a}. 
By an R\IIIa move, the OU number definitely varies by $\pm 2$ at the component of the middle segment as long as the original OU number is not 1 by Proposition \ref{prop-two}. 
We have $\Phi (K_i) \equiv 0 \pmod{2}$ for each component $K_i$ of $D$ since $\Phi (K_i) \equiv \sum_{j \neq i} lk(K_i, K_j) \pmod{2}$ by Corollary \ref{cor-lk}. 
Hence, each R\IIIa move changes the OU number of one component in this case. 
Then the total number of R\IIIa moves must be an even number. 
\end{proof}
\medskip 

\noindent For example, any sequence of Reidemeister moves between $D$ and $D'$ in Figure~\ref{fig-r2} has a non-negative even number of R\IIIa moves since the linking numbers are 2 and all components of $D$ and $D'$ have the same OU number 2.

\section*{Acknowledgments}
This work was partially supported by the JSPS KAKENHI Grant Number JP21K03263 and JST CREST, Japan, Grant Number JPMJCR25Q3.


\begin{thebibliography}{99}

\bibitem{CESS} S. Carter, M. Elhamdadi, M. Saito and S. Satoh, A lower bound for the number of Reidemeister moves of type III, Topology Appl. {\bf 153} (2006), 2788--2794. 

\bibitem{FNS} Y. Funahashi, Y. Nakanishi and S. Satoh, A note on the OU sequences of a 2-bridge knot, J. Knot Theory Ramifications {\bf 25} (13) (2016), 1671001.

\bibitem{HH} C. Hayashi, M. Hayashi and T. Nowik, Unknotting number and number of Reidemeister moves needed for unlinking, Topology Appl. {\bf 159} (2012), 1467--1474. 

\bibitem{HNSY} R. Higa, Y. Nakanishi, S. Satoh and T. Yamamoto, Crossing information and warping polynomials about the trefoil knot, J. Knot Theory Ramifications {\bf 21} (12) (2012), 1250117.

\bibitem{J} M. Jablonowski, On a surface singular braid monoid, Topology Appl. {\bf 160} (2013), 1773--1780. 

\bibitem{M} V. O. Manturov, Knot Theory, CRC Press (2004). 

\bibitem{MT} D. Michieletto and M. S. Turner, A topologically driven glass in ring polymers. PNAS, {\bf 113} (19) (2016), 5195--5200. 

\bibitem{O} O. P. \"{O}stlund, Invariants of knot diagrams and relations among Reidemeister moves, J. Knot Theory Ramifications {\bf 10} (2001), 1215--1227. 

\bibitem{S} A. Shimizu, The warping degree of a link diagram, Osaka J. Math. {\bf 48} (1) (2011), 209--231. 

\bibitem{SG} J. Smrek and A. Y. Grosberg, Minimal surfaces on unconcatenated polymer rings in melt, ACS Macro Lett. {\bf 5} (6) (2016), 750--754. 


\end{thebibliography}
\end{document}